\newtheorem{thm}{Theorem}[section]
\newtheorem{cor}[thm]{Corollary}
\newtheorem{lem}[thm]{Lemma}
\newtheorem{exa}[thm]{Example}
\theoremstyle{definition}
\newtheorem{dfn}[thm]{Definition}
\theoremstyle{remark}
\newtheorem{rem}[thm]{Remark}
\numberwithin{equation}{section}
\begin{document}

\title[]{On links in $S_{g} \times S^{1}$ and its invariants}%
\author{S.Kim}

\address{S.Kim, Bauman Moscow State Technical University, Moscow institute of physics and technology}%
\email{ksj19891120@gmail.com}%


\maketitle

\begin{abstract}
A virtual knot, which is one of generalizations of knots in $\mathbb{R}^{3}$ (or $S^{3}$), is, roughly speaking, an embedded circle in thickened surface $S_{g} \times I$. In this talk we will discuss about knots in 3 dimensional $S_{g} \times S^{1}$. We introduce basic notions for knots in $S_{g} \times S^{1}$, for example, diagrams, moves for diagrams and so on. For knots in $S_{g} \times S^{1}$ technically we lose over/under information, but we will have information how many times the knot rotates along $S^{1}$. We will discuss the geometric meaning of the rotating information and how to construct invariants by using the ``rotating'' information.
\end{abstract}

\section{Introduction}
One of generalizations of classical knot theory is {\em virtual knot theory}.
\begin{dfn}
{\em A virtual link} is an equivalence class of virtual knot diagrams modulo generalized Reidemeister moves described in Fig~\ref{vir_moves}. That is,

$$\{Virtual~link\} =\{ Virtual~link~diagrams\}/\langle moves \rangle$$
\end{dfn}

\begin{figure}
\centering\includegraphics[width=200pt]{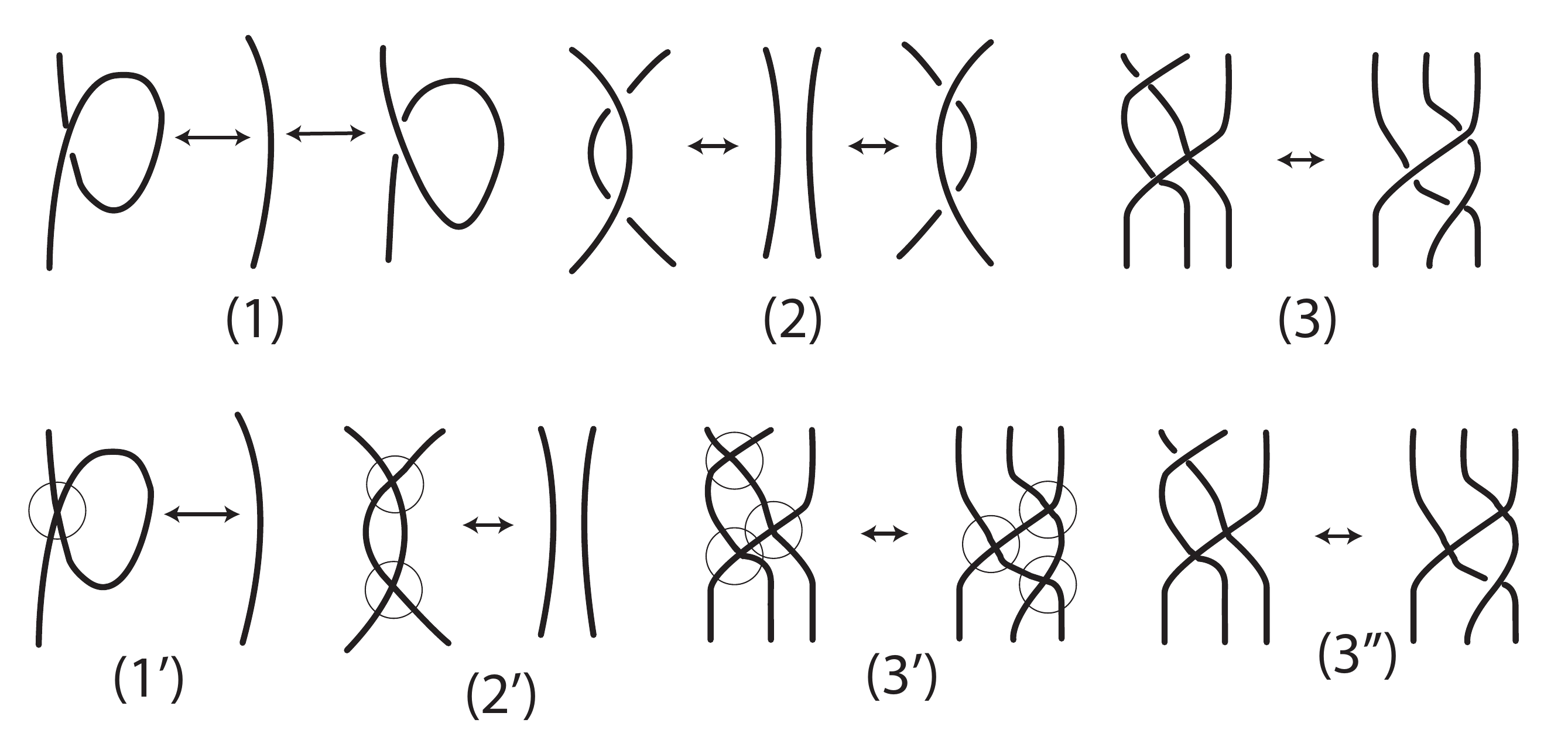} 
\caption{Generalized Reidemeister moves} \label{vir_moves}
 \end{figure}

It is well-known that the virtual links can be considered as links in a thickened surface $S_{g} \times [0,1]$ up to stabil/destabilization.

\begin{dfn}
{\em A virtual link} is a smooth embedding $L$ of a disjoint union of $S^{1}$ into $S_{g} \times [0,1]$. Each image of $S^{1}$ is called {\em a component} of $L$. A link of one component is called {\em a virtual knot}.

\end{dfn}

\begin{dfn}
  Let $L$ and $L'$ be two virtual links. If $L'$ can be obtained from $L$ by diffeomorshisms and stabil/destabilization of $S_{g} \times [0,1]$, then we call $L$ and $L'$ are \textit{equivalent}.
\end{dfn}

In virtual knot theory, by {\em a parity} defined by V.O. Manturov many invariants for classical knots are non trivially extended to virtual knots and it gives several interesting geometrical properties, for details, see \cite{Manturov_vir_book}. But, the extended invariants cannot show us something new property of classical knots, because every parity for classical knots is trivial.

In \cite{CM} M. Chrisman and V.O. Manturov studied virtual knots by using 2-component link $K \sqcup J$ with $lk(K,J)=0$, where $J$ is a fibered knot. Roughly speaking, if $J$ is a fibered knot, $S^{3} \backslash N(J)$ is homeomorphic to $\Sigma_{J} \times S^{1}$ where $\Sigma_{J}$ is a Seifert surface of $J$ and $K$ can be considered as a knot in $\Sigma_{J} \times S^{1}$. If $lk(K,J)=0$, then there exists a lifting $\hat{K} \subset \Sigma_{J} \times (0,1) \subset \Sigma_{J} \times [0,1]$ along the covering $p: \Sigma_{J} \times (0,1) \cong \Sigma_{J} \times \mathbb{R} \rightarrow \Sigma_{J} \times S^{1}$ defined by $p(x,r) = (x,e^{2\pi r})$. Then $\hat{K} \subset \Sigma_{J} \times [0,1]$ is placed in a thickened surface, that is, it can be considered as a virtual knot. Moreover, in \cite{CM} it is proved that the $\hat{K}$ is well-defined, that is, if $K \sqcup J$ and $K' \sqcup J'$ are equivalent in $S^{3}$, then $\hat{K}$ and $\hat{K}'$ are equivalent as virtual knots. But, there is a question: what happens if $lk(K,J) \neq 0$?

In this paper we discuss links in $S_{g} \times S^{1}$. The paper consists of 4 sections. In section 2, we will introduce links in $S_{g} \times S^{1}$ and its diagrams. And then we will define a ``labeling'' for each classical crossing of knots in $S_{g} \times S^{1}$, which shows that the half of a knot at each crossing rotates along $S^{1}$. In section 3, we introduce how to obtain a lifting in $S_{g} \times \mathbb{R}$ of a knot in $S_{g} \times S^{1}$. In section 4, we shortly discuss about relation links in $S^{3}$ and links in $S_{g} \times S^{1}$. In section 5, we will discuss how to construct invariants for knots in $S_{g} \times S^{1}$ by using the labels at crossings.

\section{Links in $S_{g} \times S^{1}$ and its diagram}

\begin{dfn}
{\em A link $L$ in $S_{g} \times S^{1}$} is a smooth embedding $L$ of a disjoint union of $S^{1}$ into $S_{g} \times S^{1}$. Each image of $S^{1}$ is called {\em a component} of $L$. A link of one component is called {\em a knot in $S_{g} \times S^{1}$}.

\end{dfn}

\begin{dfn}
  Let $L$ and $L'$ be two links in $ S_{g} \times S^{1}$. If $L'$ can be obtained from $L$ by diffeomorshisms and stabil/destabilization of $S_{g} \times S^{1}$, then we call $L$ and $L'$ are \textit{equivalent}.
\end{dfn}

The destabilization for $S_{g} \times S^{1}$, we mean the following; \\
Let $C$ be a non-contractible circle on the surface $S_{g}$ such that there exists a torus $T$ homotopic to the torus $C \times S^{1}$ and not intersecting the link. Then our destabilization is cutting of the manifold $S_{g} \times S^{1}$ along the torus $C \times S^{1}$ and pasting of two newborn components by $D \times S^{1}$.

Assume $x_{0} \in S^{1}$ is a point such that $S_{g} \times \{x_{0}\} \cap L(S^{1})$ is a set of finite points with no transversal points. 

\begin{figure}[h]
\begin{center}
 \includegraphics[width = 8cm]{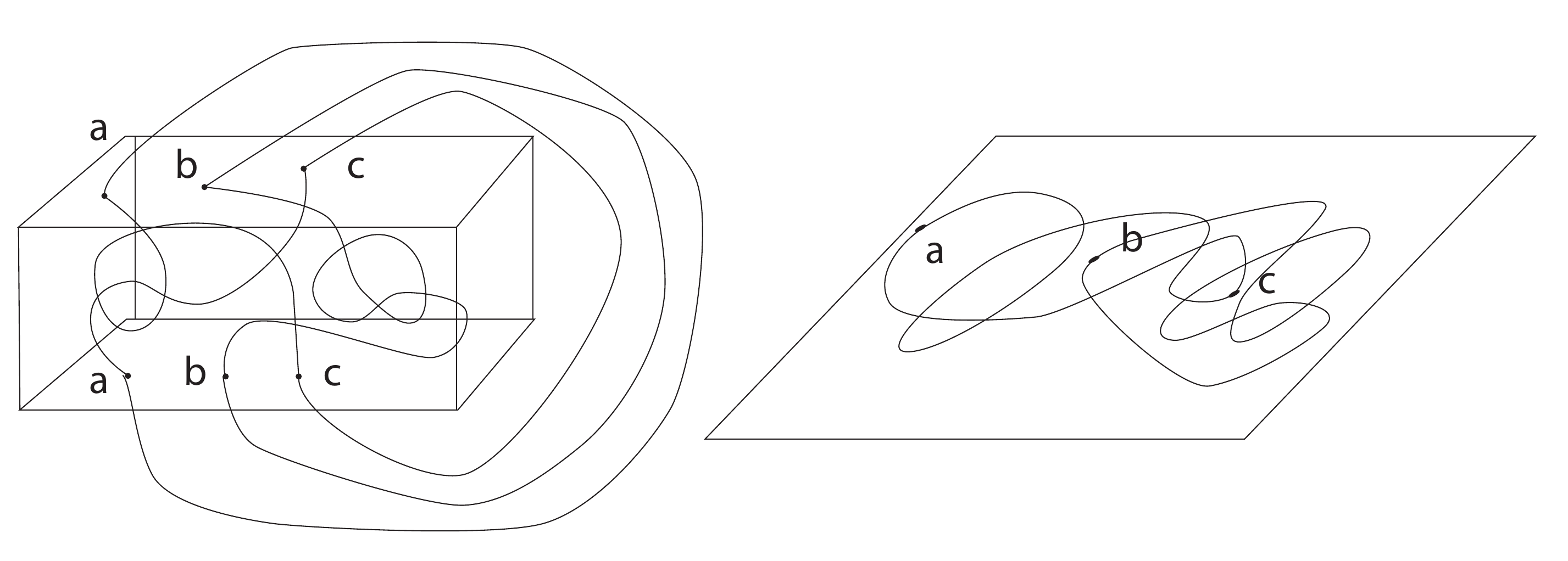}

\end{center}

\caption{}
\end{figure}

Assume that counterclockwise orientation is given on $S^{1}$.
Then $cl (S_{g} \times S^{1} )\backslash$  $(S_{g} \times \{x_{0}\})$ $\cong_{h}$ $S_{g} \times [0,1]$ where $h$ is an orientation preserving diffeomorphism.
Let $M_{L}$ $=$ $\overline{(S_{g} \times S^{1}) - (S_{g} \times \{x_{0}\})}$. Then $L$ in $M_{L}$ has a diagram on the surface $S_{g}$. The diagram of $L$ in $M_{L}$ has $n$-arcs with $n$ vertices and $m$-circles. Two arcs near to a vertex are corresponding to arcs near $S_{g} \times \{0\}$ and  $S_{g} \times \{1\}$, respectively. We change a vertex to two small lines such that if one of the lines is corresponding to an arc which is near to $S_{g} \times \{1\}$, the line is longer than another, see Figure~\ref{Vertex}.

\begin{figure}[h!]
\begin{center}
 \includegraphics[width = 4cm]{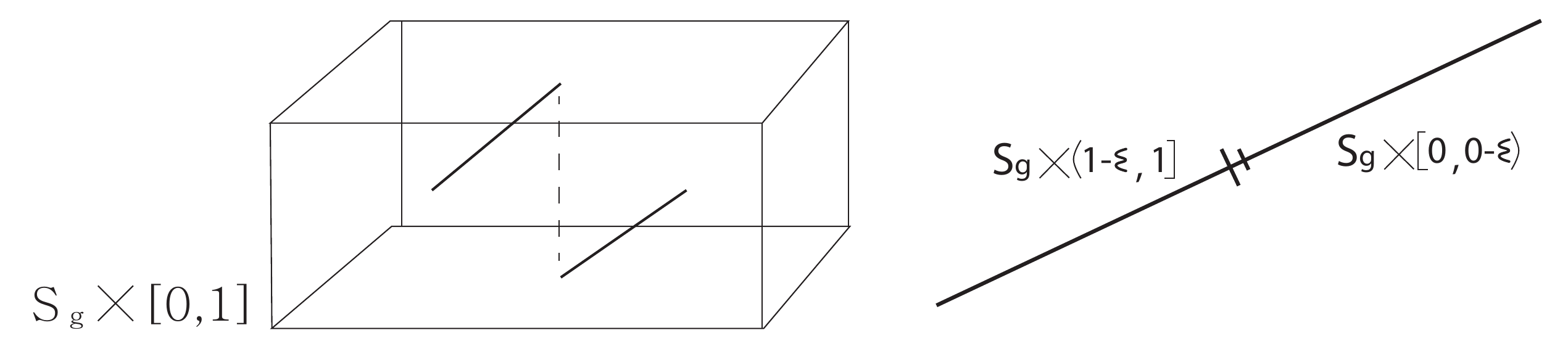}

\end{center}
\caption{}\label{Vertex}
\end{figure}

We will call this a \textit{diagram} of $L$ in $S_{g} \times S^{1}$ on $S_{g}$. For any diagrams with vertices on $S_{g}$, we can easily get a link $L$ in $S_{g} \times S^{1}$.

\begin{lem}\label{moves}
  Let $L$ and $L'$ be two links in $S_{g} \times S^{1}$. Let $D_{L}$ and $D_{L'}$ be diagrams of $L$ and $L'$ on $S_{g}$, respectively. Then $L$ and $L'$ are equivalent if and only if $D_{L'}$ can be obtained from $D_{L}$ by applying the following moves in Fig.~\ref{moves1}.

  \begin{figure}[h!]
\begin{center}
 \includegraphics[width = 8cm]{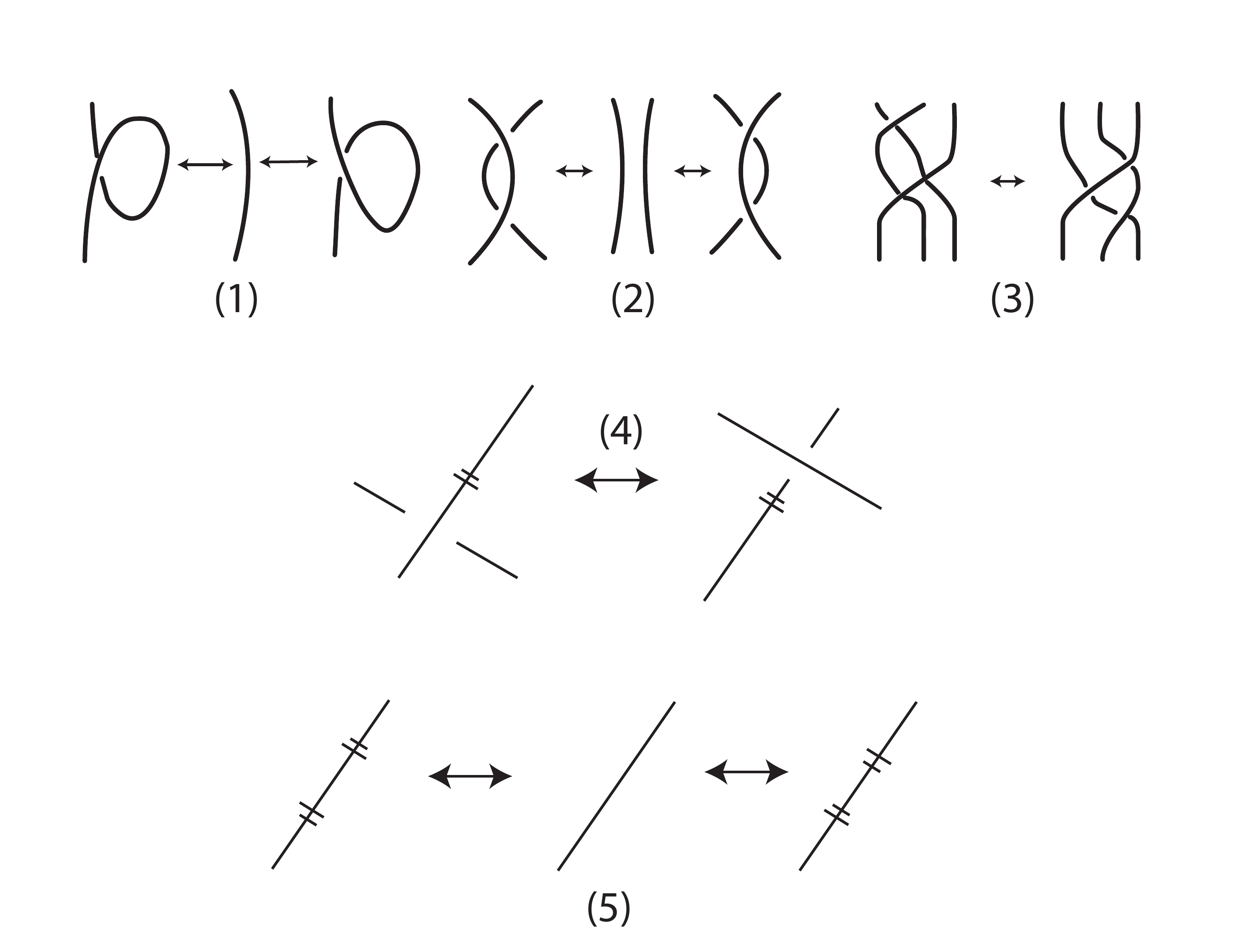}

\end{center}

\caption{}\label{moves1}
\end{figure}

\end{lem}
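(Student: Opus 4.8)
The plan is the standard one for a Reidemeister-type theorem: the ``if'' direction is a move-by-move check, and the ``only if'' direction is a general-position argument applied to a generic path in the space of links in $S_g\times S^1$, organised according to the three ingredients --- ambient isotopy, diffeomorphism, (de)stabilization --- that make up the equivalence relation.

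\textbf{The moves are equivalences.} First I would verify that each modification in Fig.~\ref{moves1} is induced by an equivalence. The three Reidemeister-type moves are realised by ambient isotopies of $L$ supported in a ball of $S_g\times S^1$; the moves that create, destroy or slide a vertex amount to pushing $L$ across the cutting fiber $S_g\times\{x_0\}$, i.e.\ to an isotopy of $S_g\times S^1$ that moves $x_0$; and the stabilization move is, by the very definition given above the lemma, the destabilization of $S_g\times S^1$ along a vertical torus $C\times S^1$ disjoint from $L$, read off in a neighbourhood of the curve $C\subset S_g$. Hence diagrams related by the moves represent equivalent links.

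\textbf{Equivalences are generated by the moves.} For the converse, assume $L$ and $L'$ are equivalent and write the equivalence as a finite composition of (a) ambient isotopies of $S_g\times S^1$, (b) diffeomorphisms, and (c) stabilizations and destabilizations; it suffices to treat each factor. For (a) I would choose the isotopy $\{L_t\}_{t\in[0,1]}$ generic with respect to the projection $\pi\colon S_g\times S^1\to S_g$ and to the fixed fiber $S_g\times\{x_0\}$; then $L_t$ carries a diagram for all but finitely many $t$, and a codimension count shows that the diagram can change across an exceptional value only by: (i) a Reidemeister~I, II or III event for $\pi(L_t)$ (a curl, a tangency of two branches, a triple point); (ii) a tangency of $L_t$ with $S_g\times\{x_0\}$, which creates or cancels a pair of vertices on one arc; or (iii) a vertex passing through a double point of $\pi(L_t)$. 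Each of (i)--(iii) is one of the moves (or a diffeomorphism of $S_g$, under which diagrams are identified); note that two vertices can never collide under $\pi$, since that would force two points of $L$ to coincide, so the list is complete. For (c), a destabilizing torus is isotopic, by a move of type (a), to a vertical torus $C\times S^1$ disjoint from $L$, and the cut-and-paste is then local near $C$, where it is exactly the stabilization move. For (b) I would pass to generators of the mapping class group of $S_g\times S^1$: since diagrams are taken up to orientation-preserving diffeomorphism of $S_g$, mapping classes of the first factor act trivially on diagrams; $S^1$-rotations are isotopic to the identity and fall under (a); and the remaining ``vertical twist'' generators $(x,\theta)\mapsto(x,\theta+f(x))$ I would realise --- after a stabilization if necessary --- by a sequence of vertex moves.

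\textbf{Main obstacle.} The two genuinely delicate points are, first, establishing completeness of the list (i)--(iii) in step (a): the transversality argument must be run carefully because the cut along $S_g\times\{x_0\}$ encodes a height function only up to the auxiliary choice of $x_0$, so the usual over/under bookkeeping is unavailable and has to be replaced by the vertex moves together with a basepoint-slide move. Second, and harder, is step (b): the mapping class group of $S_g\times S^1$ contains the vertical twists and, through the choice of regluing in a destabilization, elements that can alter the winding of $L$ around $S^1$, so one must check that every such diffeomorphism is a composition of the moves of Fig.~\ref{moves1} --- in effect, that the stabilization move and the vertex moves are jointly strong enough to absorb the full diffeomorphism group. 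This is the step whose verification I expect to require the most care.
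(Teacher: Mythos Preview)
The paper does not actually prove this lemma: immediately after Theorem~\ref{thm:diag_on_plane} it simply says ``One can find proofs for the previous lemma and theorem in \cite{}'', with the citation left blank. So there is nothing to compare your argument against --- your sketch already goes well beyond what the paper supplies.

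As a proof outline your proposal is the standard and correct shape for a Reidemeister-type result, and your enumeration of the codimension-one events under a generic isotopy (Reidemeister I--III for the projection, tangency with the fibre $S_g\times\{x_0\}$ giving birth/death of a vertex pair, and a vertex sliding through a crossing) is the right list. Your own caveats are well placed: the serious content lies in part~(b), where you must show that every orientation-preserving self-diffeomorphism of $S_g\times S^1$ acts on diagrams by a composite of the moves. In particular the vertical Dehn twists $(x,\theta)\mapsto(x,\theta+f(x))$ along an essential curve in $S_g$ genuinely change the vertex pattern of a diagram, and absorbing them requires either an explicit factorisation into vertex-slide moves or an appeal to the (de)stabilization move; this is exactly the step you flag, and it is the one a referee would ask you to spell out. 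With that piece filled in, your argument would constitute a complete proof where the paper offers none.
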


A link $L$ in $M_{L}$ has a diagram on the plane with {\em virtual crossings} as virtual links. The following theorem also holds.
\begin{thm}\label{thm:diag_on_plane}
   Let $L$ and $L'$ be two links in $S_{g} \times S^{1}$. Let $D_{L}$ and $D_{L'}$ be diagrams of $L$ and $L'$ on the plane, respectively. Then $L$ and $L'$ are equivalent if and only if $D_{L'}$ can be obtained from $D_{L}$ by applying the following moves in Fig.~\ref{moves2}.

  \begin{figure}[h!]
\begin{center}
 \includegraphics[width = 12cm]{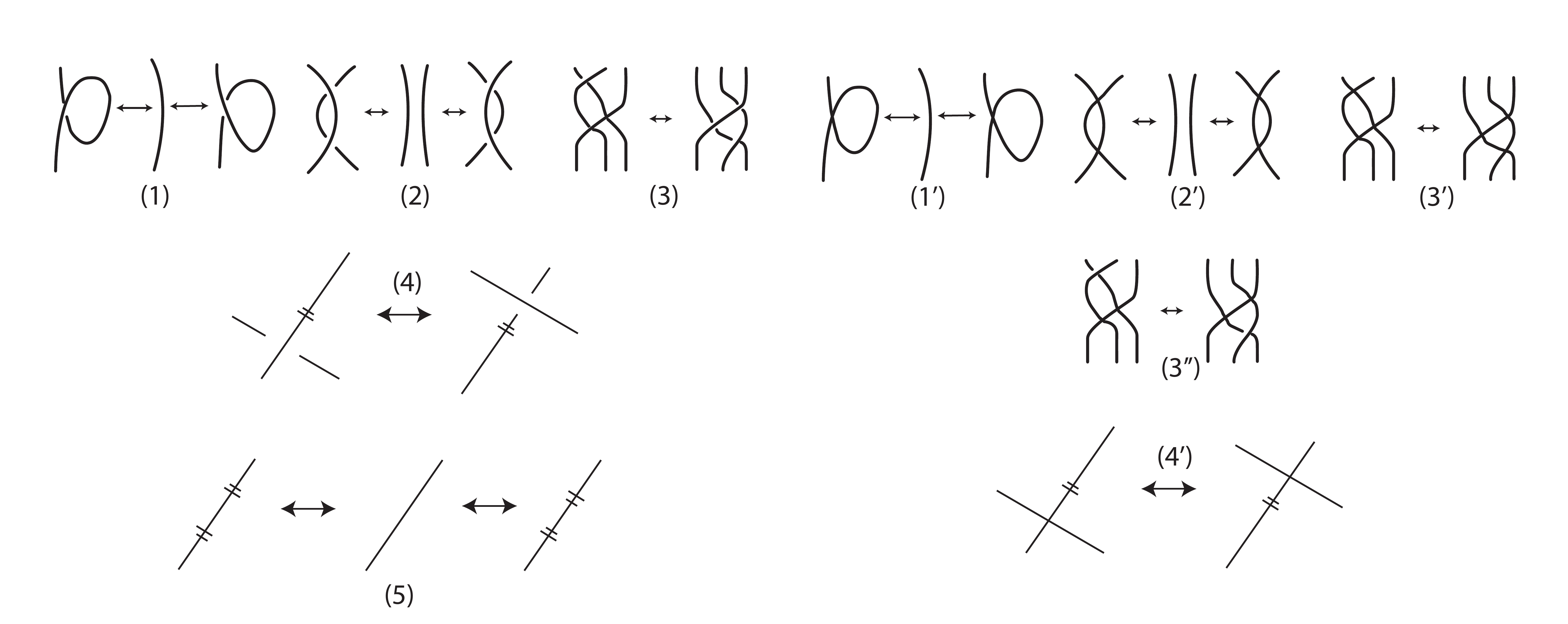}

\end{center}

 \caption{}\label{moves2}
\end{figure}
\end{thm}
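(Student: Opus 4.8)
The plan is to reduce Theorem~\ref{thm:diag_on_plane} to Lemma~\ref{moves}, which already gives the analogous statement for diagrams drawn on the surface $S_{g}$ itself. The passage from a diagram on $S_{g}$ to a diagram on the plane is exactly the same device used in ordinary virtual knot theory: choose a handle decomposition of $S_{g}$, cut the surface open along the cocores of the $1$-handles, and project what remains (a disk) to the plane; each place where an arc of the diagram crossed one of those cutting curves becomes a pair of marked endpoints on the boundary of the disk, which we then join by arcs in the plane, recording each new intersection as a virtual crossing. I would first make this correspondence precise: given $D_{L}$ on $S_{g}$ one obtains a planar diagram $\widehat{D_{L}}$ (with classical crossings, the elongated-line ``rotation'' vertices of Figure~\ref{Vertex}, and virtual crossings), and conversely every planar diagram with these three kinds of crossings is $\widehat{D_{L}}$ for some $L$ in $S_{g'} \times S^{1}$ and some choice of $g'$, after possibly adding handles to carry the virtual crossings.

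Next I would prove the two implications. For the ``if'' direction I must check that each move in Figure~\ref{moves2} is realized by an equivalence of links in $S_{g} \times S^{1}$: the purely classical Reidemeister-type moves and the moves involving the rotation vertices are, via the cut-open correspondence, literally the moves of Figure~\ref{moves1} performed inside the disk part of the surface, hence legitimate by Lemma~\ref{moves}; the purely virtual moves and the mixed (``detour'') move are the standard fact that an arc may be pushed across a $1$-handle, i.e.\ they do not change the isotopy class in the thickened surface and are absorbed by diffeomorphism. So this direction is essentially a move-by-move inspection. For the ``only if'' direction, suppose $L$ and $L'$ are equivalent in $S_{g} \times S^{1}$; by Lemma~\ref{moves} their surface diagrams are related by the moves of Figure~\ref{moves1} together with diffeomorphisms of $S_{g}$ and stabilization/destabilization. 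I would show that (i) a move of Figure~\ref{moves1}, being supported in a disk, transports to a finite sequence of moves of Figure~\ref{moves2} between the planar diagrams; (ii) a diffeomorphism of $S_{g}$ changes the chosen cutting system, and any two cutting systems are related by handle slides, which at the planar level are exactly detour moves, hence sequences of virtual and mixed moves of Figure~\ref{moves2}; and (iii) a stabilization/destabilization either adds/removes a handle disjoint from the diagram (no effect on the planar picture up to virtual moves) or, in the genus-reducing case along a torus $C \times S^{1}$ meeting the diagram, corresponds again to a detour-type simplification.

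The main obstacle I expect is step (ii)–(iii): controlling the effect of diffeomorphisms of $S_{g}$ and of destabilization on the planar diagram. In the classical virtual case this is handled by Kuperberg's theorem (uniqueness of the irreducible surface) together with the fact that the mapping class group is generated by Dehn twists, each of which acts on a cut-open diagram by detour moves; here the presence of the $S^{1}$-factor and the rotation vertices of Figure~\ref{Vertex} means I have to be careful that a Dehn twist interacts correctly with the arcs that pass through $S_{g} \times \{x_{0}\}$, i.e.\ that the ``longer line'' bookkeeping is preserved. I would isolate this as the key lemma: \emph{any homeomorphism of $S_{g}$ fixing the basepoint data acts on planar diagrams by a composition of virtual and mixed moves of Figure~\ref{moves2}}, and prove it by writing the homeomorphism as a product of Dehn twists and checking a single twist by hand. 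Granting that lemma, everything else is routine and the theorem follows by combining the two implications.
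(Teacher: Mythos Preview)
The paper does not actually prove Theorem~\ref{thm:diag_on_plane}: immediately after the statement it says ``One can find proofs for the previous lemma and theorem in \cite{}'' (with an empty citation), so there is nothing to compare your argument against. Your outline is the standard one used for the analogous result in ordinary virtual knot theory and is the right strategy here; reducing to Lemma~\ref{moves} and then handling the surface-to-plane passage via a cut system, detour moves, and the action of the mapping class group is exactly how such statements are proved in the literature.

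One small correction: in your step~(iii) you speak of destabilization ``along a torus $C\times S^{1}$ meeting the diagram''. By the paper's own definition of destabilization, the torus $C\times S^{1}$ (up to homotopy) must \emph{not} intersect the link, so after an isotopy the curve $C$ is disjoint from the surface diagram and the handle removal has no effect on the planar picture beyond virtual moves. There is no second case to treat. Apart from that, your identification of the key lemma---that a Dehn twist on $S_{g}$ acts on planar diagrams by virtual and mixed moves, compatibly with the rotation vertices---is indeed the only point requiring genuine care, and once that is checked the rest is routine.
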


One can find proofs for the previous lemma and theorem in \cite{}.
On the base of Theorem~\ref{thm:diag_on_plane} we can study knots by means of diagrams modulo local moves.

\subsection{Degree of knots in $S_{g}\times S^{1}$}
The most important information from knots in $S_{g}\times S^{1}$ is ``how many times the knot rotates along $S^{1}$''. More precisely, we consider the natural covering $\Pi: \mathbb{R} \rightarrow S^{1}$ defined by $\Pi(r) = e^{2\pi r i}$. Then the function $Id_{S_{g}} \times \Pi : S_{g}\times \mathbb{R} \rightarrow S_{g}\times S^{1}$ is also a covering over $S_{g}\times S^{1}$ where $Id_{S_{g}} : S_{g} \rightarrow S_{g}$ is the identity map.
 Let $\tilde{K}$ be a lifting of $K$ into $S_{g} \times \mathbb{R}$ along a covering $Id_{S_{g}} \times \Pi : S_{g}\times \mathbb{R} \rightarrow S_{g}\times S^{1}$. For a line segment $l$ of a diagram $D$ of $K$, there is a line segment $l'$ in $\tilde{K}$ in $S_{g} \times \mathbb{R}$. Let $\phi_{2} : S_{g} \times \mathbb{R} \rightarrow \mathbb{R}$. If $\phi_{2}(l') \in [a, a+1)$, then give a label $a$ to $l$. We consider the label $a$ as an element of $\mathbb{Z}$.


\begin{rem}\label{remark}
  For labels $a,b$ in the following figure, $a=b+1$.
  \begin{figure}[h!]
\begin{center}
 \includegraphics[width = 3cm]{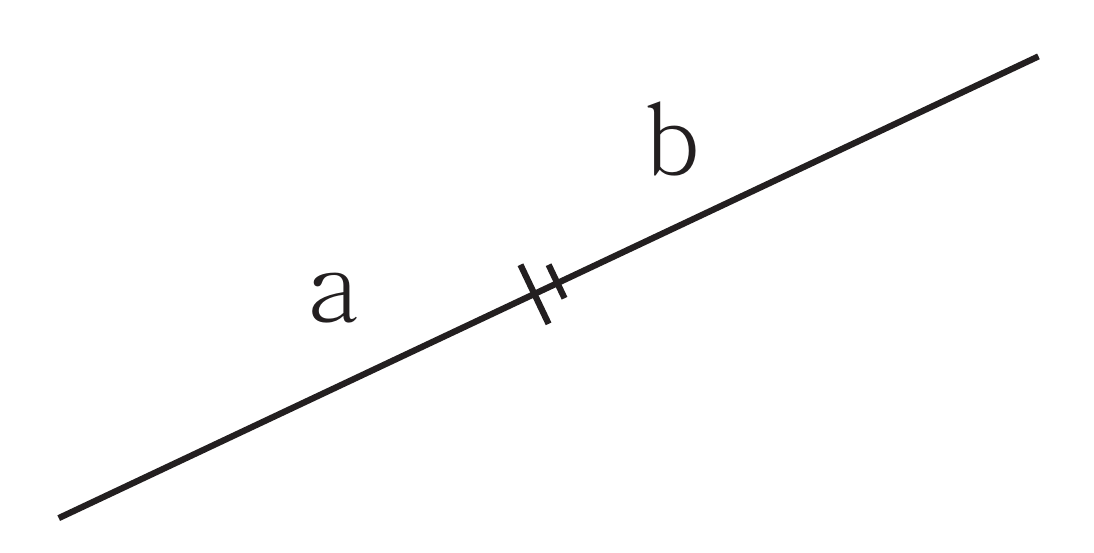}

\end{center}
 \caption{}
\end{figure}
\end{rem}

For each crossing, if over-arc is labeled by $b$ and under-arc is labeled by $a$ for some $a,b \in \mathbb{Z}$, then give a label $i = b - a$ to the crossing where $b-a$ is in $\mathbb{Z}$. Then we call $D$ with labeling for each classical crossing a \textit{labeled diagram}.
\begin{figure}[h!]
\begin{center}
 \includegraphics[width = 3cm]{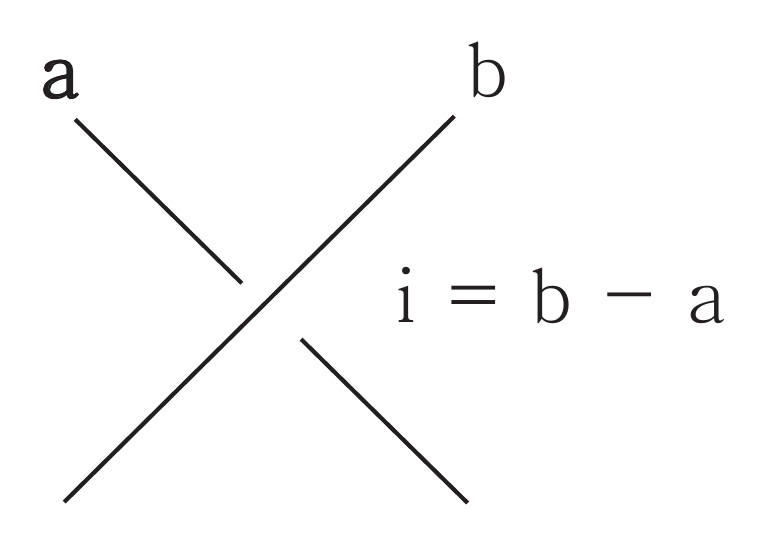}

\end{center}
 \caption{}
\end{figure}
\begin{thm}
  Let $K$ and $K'$ be two knots in $S_{g} \times S^{1}$ with $deg(K) = deg(K')$. Let $D_{K}$ and $D_{K'}$ be labeled diagrams of $K$ and $K'$, respectively. Then $K$ and $K'$ are equivalent if and only if $D_{K'}$ can be obtained from $D_{K}$ by applying the moves in Fig.~\ref{labelmoves2}.

  \begin{figure}[h!]\label{labelmoves2}
\begin{center}
 \includegraphics[width = 12cm]{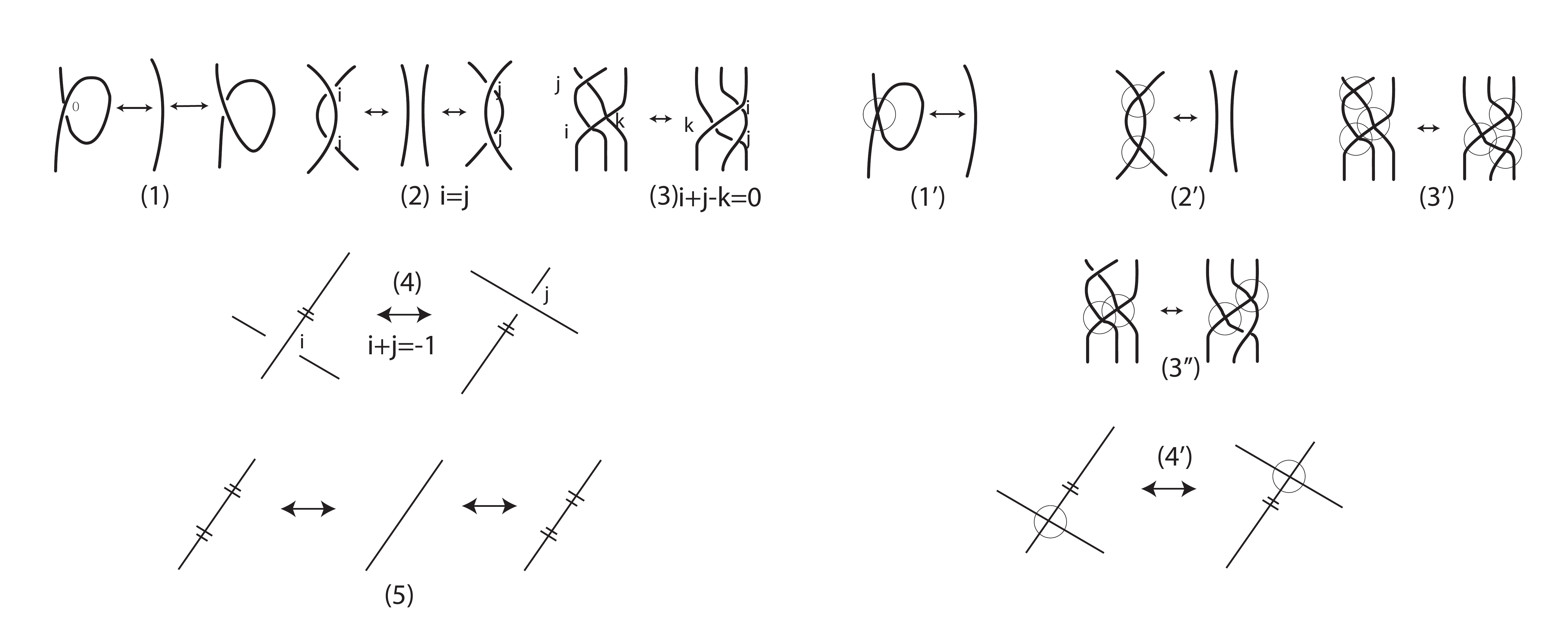}

\end{center}
 \caption{}
\end{figure}

\end{thm}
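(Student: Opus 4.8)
The plan is to reduce this statement to the already-established diagrammatic calculus of Theorem~\ref{thm:diag_on_plane} by checking that the labeling is a complete bookkeeping device for the "rotating" information, and that the new moves in Fig.~\ref{labelmoves2} are exactly the images of the moves in Fig.~\ref{moves2} once labels are carried along. First I would fix, for each knot $K$ with a chosen diagram $D_K$, a lift $\tilde K \subset S_g \times \mathbb{R}$ along $Id_{S_g}\times\Pi$; the labeling of arcs by the integer part of $\phi_2$ on the corresponding lifted segment is then canonical up to the global ambiguity of which lift we picked, and changing the lift shifts every arc label by the same constant $c\in\mathbb{Z}$, hence leaves every crossing label $i=b-a$ unchanged. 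This shows the crossing-labeled diagram is well defined up to the relabeling move (the one that adds $1$ to labels across the distinguished line $S_g\times\{x_0\}$, cf. Remark~\ref{remark}), which I would include among the moves of Fig.~\ref{labelmoves2} or absorb into the equivalence. I would also record that $\deg(K)$ equals the total signed count of times $\tilde K$ is pushed up by crossing the $x_0$-wall, equivalently a sum of crossing labels along a component, so the hypothesis $\deg(K)=\deg(K')$ is the necessary compatibility condition for a label-preserving chain of moves to exist.

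Next, for the "only if" direction, suppose $K$ and $K'$ are equivalent. By Theorem~\ref{thm:diag_on_plane} there is a finite sequence of moves from Fig.~\ref{moves2} taking $D_K$ to $D_{K'}$. I would go move by move and show each can be upgraded to a move on labeled diagrams: for a Reidemeister-type move supported in a disk, take the lift of the portion of the knot meeting that disk, observe that the move is realized by an ambient isotopy of $S_g\times\mathbb{R}$ supported near the lifted piece, and read off how the arc labels inside the disk are forced; the resulting local replacement of labeled pictures is precisely one of the diagrams in Fig.~\ref{labelmoves2} (for the classical R1, R2, R3 moves the crossing labels are determined by the incident arc labels, and for the moves involving the long/short vertex lines the label jumps by $\pm1$ as in Remark~\ref{remark}). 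One subtlety is that an intermediate diagram in the Fig.~\ref{moves2} sequence may carry arc labels that do not match the "current" choice of lift; here I use the first paragraph's observation that a global shift of all arc labels is harmless, so I may renormalize between moves. Concatenating the upgraded moves gives a sequence of Fig.~\ref{labelmoves2} moves from $D_K$ to $D_{K'}$ (after a final global relabeling, permitted since $\deg(K)=\deg(K')$ forces the two normalizations to agree).

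For the "if" direction, each move in Fig.~\ref{labelmoves2} is, after forgetting labels, a move in Fig.~\ref{moves2}, hence realizes an equivalence of the underlying links by Theorem~\ref{thm:diag_on_plane}; so a label-respecting sequence certainly produces an equivalence $K\simeq K'$. (The degree equality is then automatic, consistent with the hypothesis.) The only real content here is checking that every labeled move is label-consistent with some geometric move — i.e., that the label changes displayed in Fig.~\ref{labelmoves2} are exactly those dictated by lifting — which is the local computation already done in the "only if" direction.

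The main obstacle I anticipate is the bookkeeping in the "only if" direction: tracking the labels through an arbitrary sequence of Fig.~\ref{moves2} moves, in particular handling the wall-crossing moves (where an arc passes through $S_g\times\{x_0\}$, changing the long/short vertex decoration) and making sure the renormalization-between-moves argument is airtight so that no global label discrepancy survives except the one killed by $\deg(K)=\deg(K')$. Everything else is a finite case-check over the move list and the elementary covering-space observation that lifts of a loop in $S_g\times S^1$ differ by a deck transformation, i.e.\ a global integer shift in the $\mathbb{R}$-coordinate.
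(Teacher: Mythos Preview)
Your proposal is correct and follows essentially the same strategy as the paper: reduce to the unlabeled-move calculus (the paper invokes Lemma~\ref{moves}, you invoke Theorem~\ref{thm:diag_on_plane}) and then verify move-by-move that the crossing labels transform as shown in Fig.~\ref{labelmoves2}. The paper's proof is simply the concrete execution of your case-check---it computes, for each move, the arithmetic constraint on the incident crossing labels (R1 forces label $0$; R2 forces $i+j=0$; R3 forces $i+j-k=0$; move~(4) forces $i+j=-1$ via Remark~\ref{remark})---and it dispenses with your renormalization worry, since the labeled diagram records only the differences $b-a$ at crossings, which are already invariant under a global shift of arc labels.
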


\begin{proof}
  Let $K$ and $K'$ be two knots in $S_{g} \times S^{1}$ with $deg(K) = deg(K')$. Let $D_{K}$ and $D_{K'}$ be labeled diagrams of $K$ and $K'$, respectively. By lemma~\ref{moves}, the moves are sufficient to present the equivalence of $K$ and $K'$. We focus on the relations among the labels of classical crossings related to each move. Note that arcs in a diagram corresponding to one arc in $M_{L}$ have the same label. For the move 1, it is related to only one arc. By the definition of labeling for classical crossings, the labeling of the classical crossing of the move 1 is $0$. For the move 2, this is related to two arcs which is labeled by $a$ and $b$, respectively. If the arc labeled by $b$ is an over arc, then two classical crossings are labeled by $i=b-a$. If the arc labeled by $a$ is an over arc, then two classical crossings are labeled by $j=a-b$. Then $i+j=0$. For the move 3, assume that the top arc is labeled by $a$, the middle arc is labeled by $b$ and the bottom arc is labeled by $c$. Let $i$, $j$ and $k$ be labels of crossings between the top arc and the middle arc, the middle arc and the bottom arc and the top arc and the bottom arc, respectively. Then $i =a-b$, $j=b-c$ and $k = a-c$. We can get $i+j-k = 0$. For the move 4; note that near a vertex, if two arcs have label $a$ and $b$ respectively, then $b-a=1$ by Remark \ref{remark}. Since the moving arc moves in $M_{K}$, the label of the arc is not changed. If $b-a=1$, then $i = a-c$ and $j = c-b = c-a-1$. Therefore $i+j=-1$.
\end{proof}

\begin{rem}
Geometrically, the label of a crossing $c$ means how many times the curve from $c$ to $c$ turns around $S^{1}$.
\end{rem}

\begin{rem}

If we consider the indices for classical crossing modulo $2$, then it becomes the parity. If we separate classical crossings by index $0$ and others, then it is a weak parity.
\end{rem}

\section{Knots in $S_{g} \times S^{1}$ and their liftings}

\subsection{Knots in $S_{g} \times S^{1}$ with degree $0$}

Let $K$ be an oriented knot in $S_{g} \times S^{1}$ with degree $0$. Then there exists a lifting $\hat{K}$ to $S_{g} \times \mathbb{R}$. Since $K$ has the degree $0$, $\hat{K}$ is a knot in $S_{g} \times \mathbb{R}$. In this section we construct a diagram of the lifting $\hat{K}$ to $S_{g} \times \mathbb{R}$. The algorithm is as follows:\\
\textbf{Step 1.} Let $D$ be an oriented diagram of $K$ in $S_{g} \times S^{1}$. Let fix a point point on a diagram and give a label for each arcs according to double lines. Let us say that the minimal label is $m$ and the maximal label is $M$.\\

\textbf{Step 2.} Let us make $M-m+1$ parallel planes placed vertically. Give numberings from bottom to top by integers from $m$ to $M$. Draw $M-m$ copies of $D$ for each plane. For a copy of a diagram on the plane with $k$ erase arcs which are not labeled by $k$. \\

\textbf{Step 3.} Let us start walking from the point on the diagram on the plane with number $0$, which corresponds to the fixed point. When we meet the double line, if it is longer line, then we connect the arc to the arc on the plane with number $1$, but if it is shorter line, then we connect the arc to the arc on the plane with number $-1$. Let us denote the obtained diagram by $\hat{D}$.

\begin{rem}
From the obtained diagram $\hat{D}$ one can easily obtain a knot in $S_{g}\times \mathbb{R}$.
\end{rem}

\begin{thm}
Let $D$ and $D'$ be two oriented diagrams in $S_{g} \times S^{1}$. If they are equivalent, then $\hat{D}$ and $\hat{D'}$ are equivalent as knots in $S_{g} \times \mathbb{R}$.
\end{thm}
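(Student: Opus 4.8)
The plan is to reduce to a move-by-move verification using Lemma~\ref{moves} (equivalently Theorem~\ref{thm:diag_on_plane}). Since ``$\hat D$ and $\hat{D'}$ are equivalent as knots in $S_{g}\times\mathbb{R}$'' is an equivalence relation, and since by Lemma~\ref{moves} any two equivalent diagrams $D$, $D'$ in $S_{g}\times S^{1}$ are joined by a finite sequence of the local moves of Fig.~\ref{moves1}, together with diffeomorphisms of $S_{g}$ and stabilization/destabilization (all of which lift to the analogous operations on $S_{g}\times\mathbb{R}$), it suffices to treat the case where $D'$ is obtained from $D$ by a single such move, supported in a disk on $S_{g}$ disjoint from the basepoint used in the lifting algorithm.

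First I would record the harmless ambiguities in the construction. Moving the fixed basepoint or re-indexing the reference plane replaces every label $a$ by $a+c$ for a fixed $c\in\mathbb{Z}$; upstairs this is exactly the deck transformation $(x,r)\mapsto(x,r+c)$ of $Id_{S_{g}}\times\Pi$, a diffeomorphism of $S_{g}\times\mathbb{R}$, so $\hat D$ is well defined up to vertical translation and hence up to equivalence. Likewise, if a move changes the extremal labels $m$, $M$ and so the number of planes, the additional plane carries only arcs disjoint from and unknotted relative to the rest (or no arcs at all), and can be inserted or deleted by a planar isotopy of $\hat D$; thus we may assume $\hat D$ and $\hat{D'}$ are drawn on one common stack of planes.

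The move-by-move analysis then splits in two. Moves not involving a vertex --- the Reidemeister moves R1, R2, R3 on $S_{g}$ and the virtual/detour moves among virtual crossings --- involve only arcs carrying a single common label $k$ (arcs descending from one arc of $L$ in $M_{L}$ all receive the same label). Such a move takes place entirely in the plane numbered $k$, so $\hat D\to\hat{D'}$ is the identical move, performed in $S_{g}\times\{k\}\subset S_{g}\times\mathbb{R}$ (respectively a detour move), and the liftings are equivalent. The remaining moves are the ones involving the double lines: sliding an arc past a vertex, sliding a vertex along an arc, and creating or cancelling a pair of vertices where the knot briefly visits an adjacent level. For these one tracks how the inter-plane connecting arcs built in Step~3 are rerouted. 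The key point is that a vertex of $D$ is precisely the footprint of a place where $\hat K$ runs monotonically from level $k$ to level $k\pm1$; sliding a strand past it, or sliding the vertex itself, is the diagrammatic shadow of an ambient isotopy of that strand through the transition collar $S_{g}\times[k,k+1]$, realized by a bounded sequence of Reidemeister moves and planar isotopies, while creating or cancelling a pair of vertices corresponds to pushing a small finger of $\hat K$ up into the next level and retracting it, i.e.\ a planar isotopy (or an R1 if a curl is present) in $S_{g}\times\mathbb{R}$. Running this through each picture in Fig.~\ref{moves1}, with the label bookkeeping governed by the relation $b-a=1$ across a vertex from Remark~\ref{remark} exactly as in the proof of the labeled-diagram theorem, completes the argument.

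The main obstacle is this last case: making precise, for each vertex-type move, that the rerouting of the connecting arcs between consecutive planes is genuinely realized by an isotopy-plus-Reidemeister sequence in $S_{g}\times\mathbb{R}$. One must check that nothing goes wrong from the loss of over/under data --- it does not, because upstairs we sit in a thickened surface with the levels genuinely stacked in the $\mathbb{R}$ direction --- and that a move which is local on $S_{g}$ can still touch two adjacent planes of $\hat D$ at once; organizing the picture so that exactly those two planes are modified while all others are carried along rigidly is the crux.
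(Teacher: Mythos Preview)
Your overall strategy---reduce to a single local move and verify that each move lifts---is exactly what the paper does. The preliminary remarks about basepoint shifts corresponding to deck transformations and about adjusting the number of planes are good hygiene that the paper omits.

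There is, however, a real error in your treatment of the non-vertex moves. You claim that R1, R2, R3 ``involve only arcs carrying a single common label $k$'' and hence lift to the identical move on a single plane. This is false for R2 and R3: the distinct strands participating in an R2 or R3 are in general separate arcs of $L$ in $M_L$ and carry \emph{different} labels $a$, $b$ (and $c$). The paper's own proof of the labeled-diagram theorem says exactly this: move~2 relates arcs labeled $a$ and $b$, move~3 relates arcs labeled $a$, $b$, $c$. So in the lift the strands sit on different planes, and the move does \emph{not} take place inside one $S_g\times\{k\}$. The correct argument is that the over/under in $\hat D$ is dictated by the heights: the higher-labeled strand is over at every crossing, and one checks that with this forced crossing data the local picture before and after is still related by a genuine (classical or virtual) Reidemeister move in $S_g\times\mathbb{R}$. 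The paper compresses this into ``it is easy to see that $\hat{D'}$ can be obtained from $\hat D$ by applying virtual Reidemeister moves,'' but that is the content, not the single-plane claim you made.

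For the vertex moves you correctly identify the crux but stop short of the verification. The paper carries out precisely the case split you anticipate: for move~(4) (arc sliding past a vertex) it records the labels $a$, $a+1$, $b$ on the three local strands, separates into the cases $a\ge b$ and $a<b$, and observes that in the lift the only change is the location of one classical crossing; for move~(5) (creation/cancellation of a pair of double lines) it notes that the short arc between the two double lines lifts to a strand on a level with nothing beneath it, so it can be pushed down by isotopy. Your heuristic about ``pushing a finger up into the next level'' is the right intuition for~(5), and your ``isotopy through the transition collar'' is the right intuition for~(4); what is missing is the explicit label bookkeeping and the two-case check that the paper supplies.
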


\begin{proof}

Assume that $D'$ is obtained from $D$ by applying one of the moves in Fig.~\ref{moves2}. If $D'$ is obtained from $D$ by applying (1), (2), (3), (1'), (2'), (3') and (3''), then it is easy to see that $\hat{D'}$ can be obtained from $\hat{D}$ by applying virtual Reidemeister moves.

Suppose that $D'$ is obtained from $D$ by applying (4). The line segments in move (4) have labels $a,a+1$ and $b$ as described in Fig.~\ref{lifting4_type1} and Fig.~\ref{lifting4_type2}. As shown in Fig.s, when we lift diagrams, the difference is just a change of the place of a classical crossing and it follows that $\hat{D} \cong \hat{D}'$. Analogously one can show that $\hat{D} \cong \hat{D}'$ when $D'$ is obtained from $D$ by applying (4').

  \begin{figure}[h!]
\begin{center}
 \includegraphics[width = 8cm]{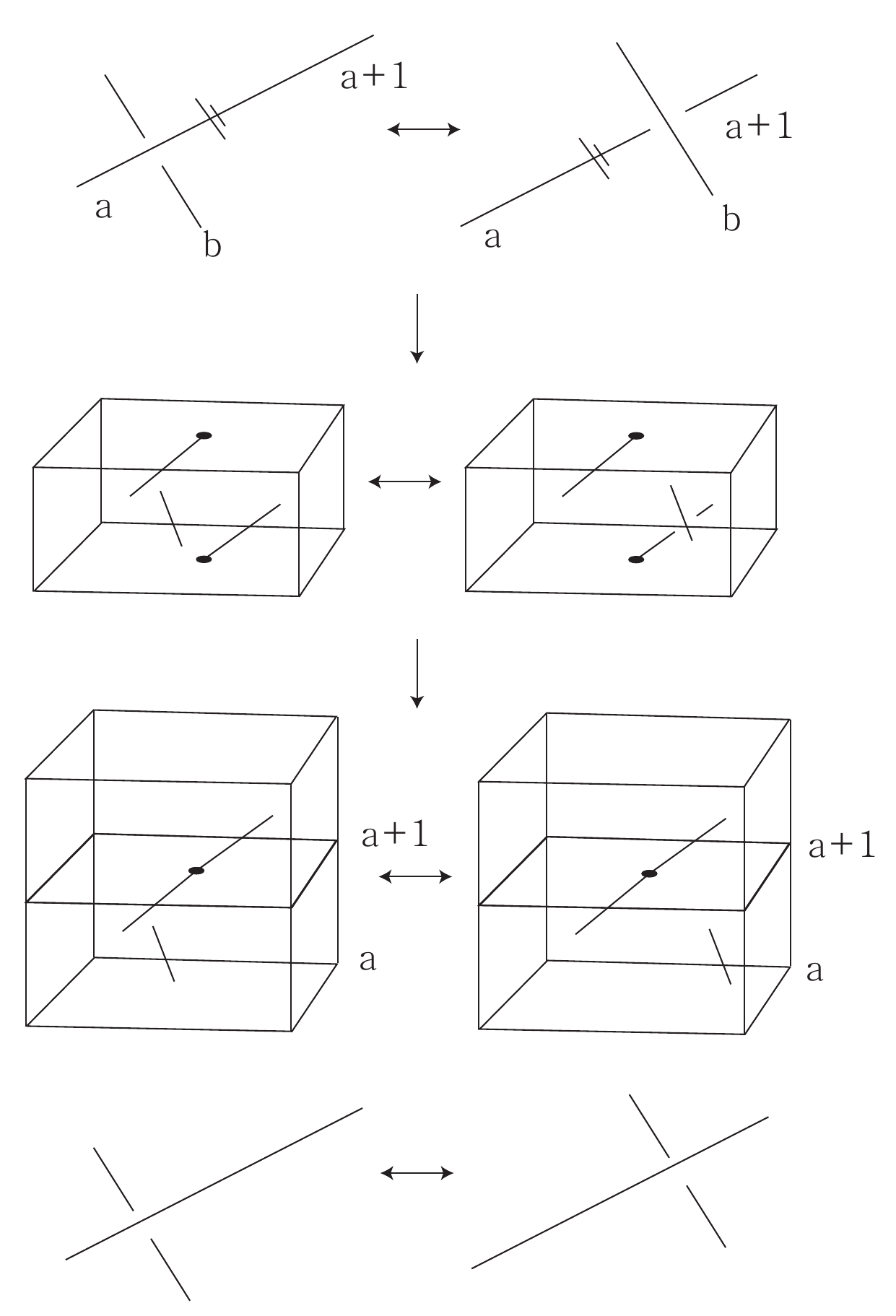}

\end{center}
 \caption{ $a\geq b$}\label{lifting4_type1}
\end{figure}

  \begin{figure}[h!]
\begin{center}
 \includegraphics[width = 8cm]{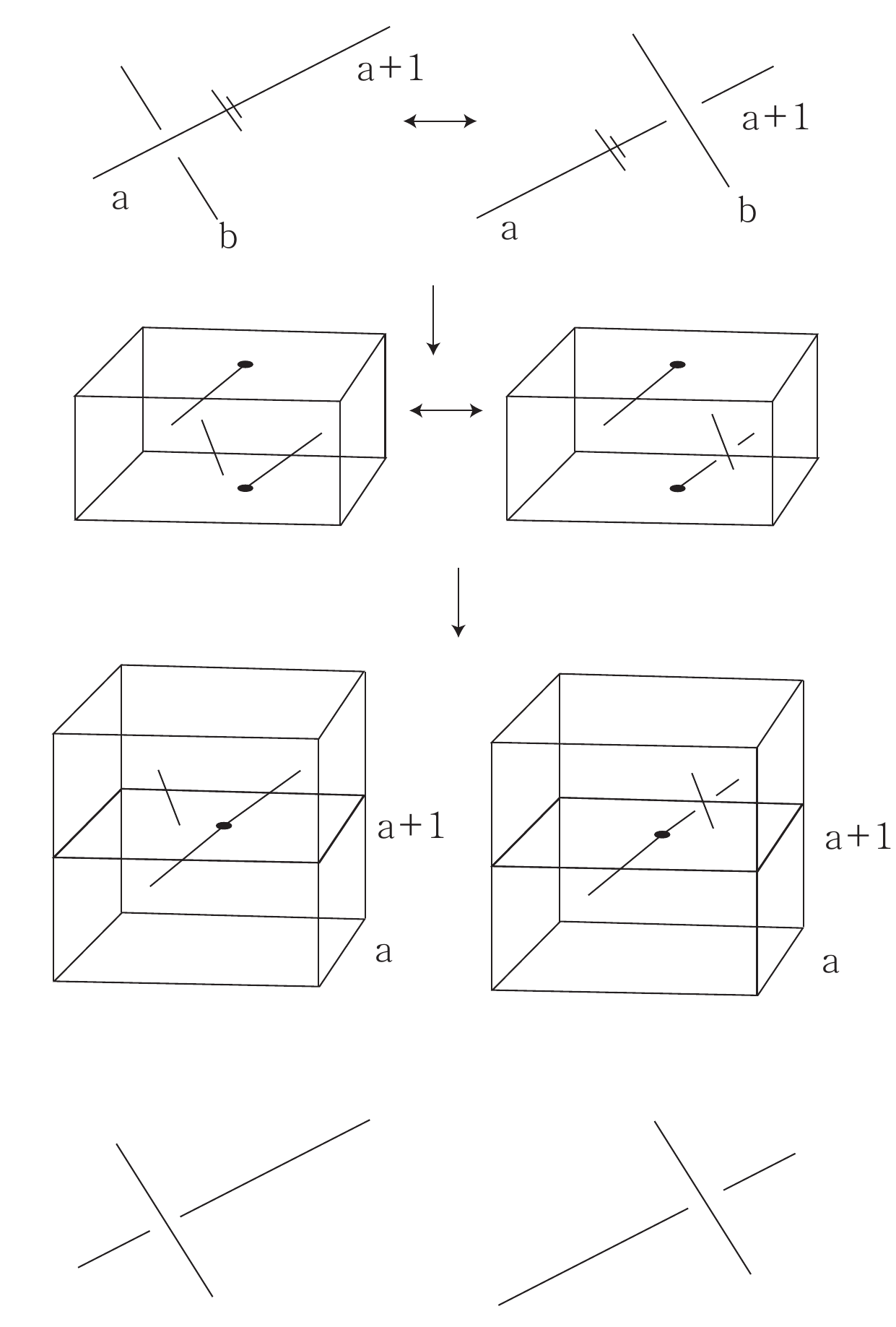}

\end{center}
 \caption{ $a< b$}\label{lifting4_type2}
\end{figure}

Suppose that $D'$ is obtained from $D$ by applying (5), see Fig.~\ref{lifting5}. When we lift the link to $S_{g} \times \mathbb{R}$ the line segment between two double-lines and others are placed in the different levels. Note that in $S_{g} \times \mathbb{R}$ under the line segment corresponding to the line segment between two double-lines there are no other arcs, so we can push it down as described under of Fig.~\ref{lifting5}. That is, $\hat{D} \cong \hat{D}'$ and the proof is completed.

  \begin{figure}[h!]
\begin{center}
 \includegraphics[width = 8cm]{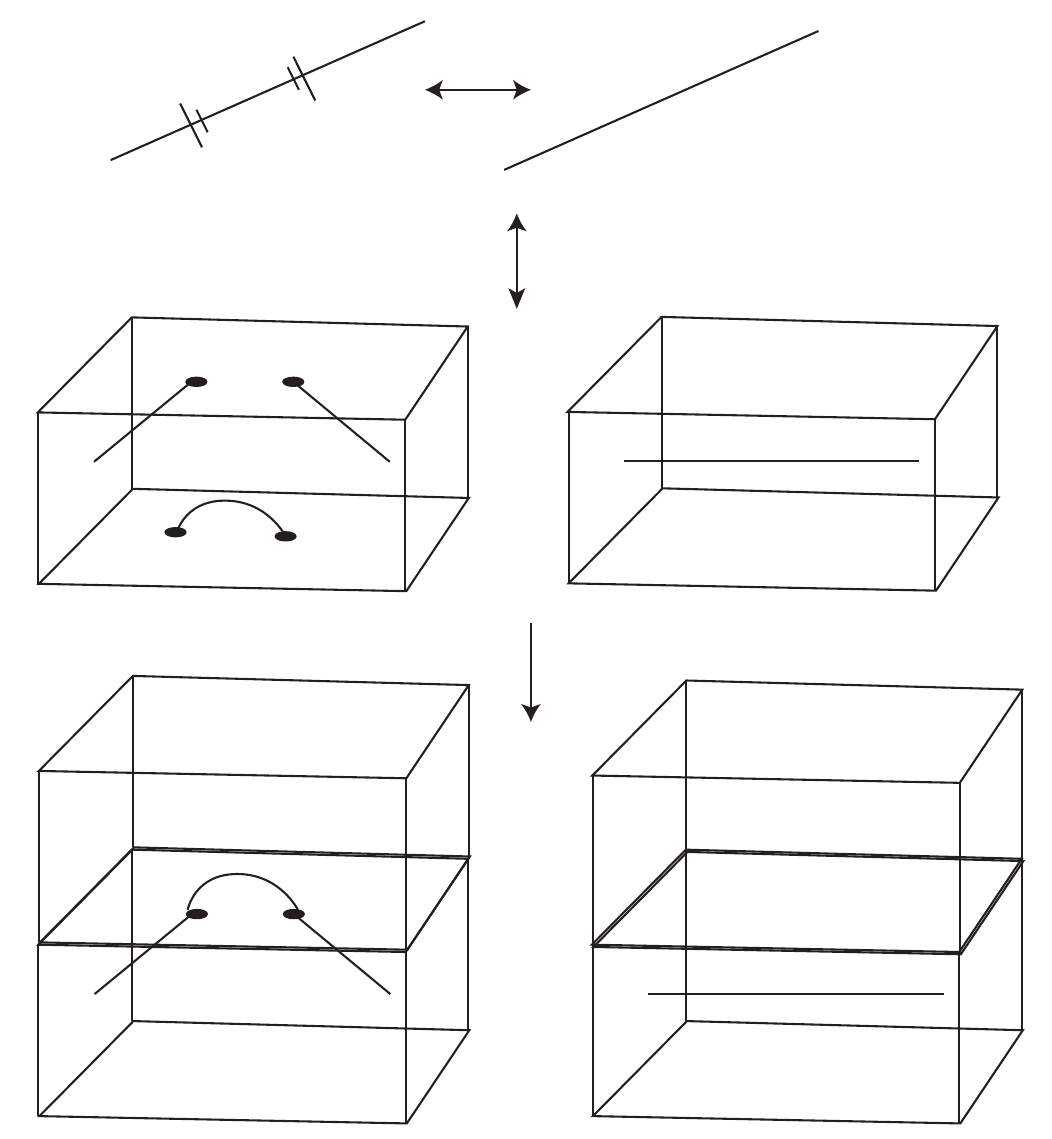}

\end{center}
 \caption{ }\label{lifting5}
\end{figure}

\end{proof}

\begin{rem}

If we consider liftings $\hat{K}_{s}$ such that $\hat{K}_{s}(0) = \hat{K}_{s}(0) = (x, s)$, then we obtain a link of infinitely many components. Note that the classical crossing with label $b-a$, then it corresponds to the crossing of infinite link $\sqcup_{s \in \mathbb{Z}} K_{s}$ between two components $K_{a}$ and $K_{b}$.

\end{rem}

\begin{cor}
Let $K$ and $K'$ be knots in $S_{g} \times S^{1}$. If $K$ and $K'$ are equivalent, then the liftings $\sqcup_{s \in \mathbb{Z}} K_{s}$ and $\sqcup_{s \in \mathbb{Z}} K'_{s}$ are equivalent in $S_{g} \times \mathbb{R}$.  
\end{cor}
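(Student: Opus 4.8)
The plan is to deduce this from the preceding Theorem (which handles a single lift of a degree--$0$ diagram) together with Theorem~\ref{thm:diag_on_plane}, by bootstrapping to the full $\mathbb{Z}$--equivariant family of lifts. First I would record what $\sqcup_{s\in\mathbb{Z}}K_{s}$ actually is: it is the total preimage $(Id_{S_{g}}\times\Pi)^{-1}(K)$ in $S_{g}\times\mathbb{R}$, on which the deck group $\mathbb{Z}$ acts freely by translation in the $\mathbb{R}$--factor, sending the component $K_{s}$ to $K_{s+1}$. Correspondingly, a plane diagram $\hat D_{\infty}$ for it is produced by exactly the algorithm of Steps 1--3, except that one copy of the labelled diagram $D$ is drawn on \emph{every} integer level and the copies are reconnected along the double lines; the single--lift diagram $\hat D$ of the previous Theorem is the sub--diagram of $\hat D_{\infty}$ supported on finitely many consecutive levels, and $\hat D_{\infty}=\bigcup_{j\in\mathbb{Z}}(\hat D+j)$.

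By Theorem~\ref{thm:diag_on_plane} it then suffices to show that if $D'$ is obtained from $D$ by a single move of Fig.~\ref{moves2}, then $\hat D_{\infty}'$ is obtained from $\hat D_{\infty}$ by a (locally finite) sequence of classical and virtual Reidemeister moves in $S_{g}\times\mathbb{R}$. The mechanism is localization: every move of Fig.~\ref{moves2} is supported in a region of $S_{g}\times S^{1}$ of product form $V\times J$ with $V$ a disk--like region in $S_{g}$ and $J\subsetneq S^{1}$ an open arc --- for $(1)$--$(3)$, $(1')$--$(3')$ and $(3'')$ one may take $J\subset S^{1}\setminus\{x_{0}\}$, and the moves $(4),(4'),(5)$ touching the double line are supported in a neighbourhood $V\times(x_{0}-\eps,x_{0}+\eps)$ of the distinguished fibre. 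In all cases the preimage of the support under $Id_{S_{g}}\times\Pi$ is a \emph{disjoint} union $\bigsqcup_{j\in\mathbb{Z}}(V\times J_{j})$ of homeomorphic copies, and inside each copy $\hat D_{\infty}$ is an identical replica of $K$ restricted to $V\times J$. Hence the move performed downstairs corresponds to performing the very same local modification \emph{simultaneously and independently} in each of these disjoint chunks upstairs; and that each such chunk--modification is a legitimate equivalence in $S_{g}\times\mathbb{R}$ is exactly what the preceding Theorem establishes --- a classical or virtual Reidemeister move on a single level for $(1)$--$(3'')$, the relocation of a classical crossing to an adjacent level depicted in Fig.~\ref{lifting4_type1} and Fig.~\ref{lifting4_type2} for $(4),(4')$, and the push--down depicted in Fig.~\ref{lifting5} for $(5)$.

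The step I expect to be the crux is precisely this localization: one must check that for the moves touching the double line the preimage chunks $V\times(x_{0}-\eps,x_{0}+\eps)$ really are pairwise disjoint (so that relocating a crossing inside the $j$--th chunk does not collide with the copies $K_{j\pm1}$ of the link), and that the ``simultaneous'' modification over all $j\in\mathbb{Z}$ is genuinely locally finite, hence realizes an actual equivalence of links in $S_{g}\times\mathbb{R}$ rather than merely a formal coincidence of diagrams. Granting this, the only remaining point --- that an arbitrary diffeomorphism or (de)stabilization of $S_{g}\times S^{1}$ carrying $K$ to $K'$ may first be replaced by a finite sequence of the local moves of Fig.~\ref{moves2} --- is already supplied by Theorem~\ref{thm:diag_on_plane}, so no argument beyond the equivariant version of the previous Theorem is needed, and $\hat D_{\infty}\cong\hat D_{\infty}'$, i.e. $\sqcup_{s\in\mathbb{Z}}K_{s}\cong\sqcup_{s\in\mathbb{Z}}K'_{s}$, follows.
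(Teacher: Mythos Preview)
The paper gives no proof of this Corollary at all; it is stated immediately after the Remark about the infinite family of lifts and is evidently meant to follow directly from the preceding Theorem together with that Remark. Your proposal is a correct and careful expansion of exactly this intended deduction: you identify $\sqcup_{s\in\mathbb{Z}}K_{s}$ as the full preimage under $Id_{S_{g}}\times\Pi$, reduce via Theorem~\ref{thm:diag_on_plane} to checking a single move, and then invoke the local analysis of the preceding Theorem equivariantly on each of the $\mathbb{Z}$--translated copies. This is essentially the same approach the paper leaves implicit, only made explicit; the localization and local--finiteness checks you flag as the crux are genuine but routine, and nothing beyond what you outline is needed.
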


\subsection{Knots in $S_{g} \times S^{1}$ with degree $k$}

Let $K$ be an oriented knot in $S_{g} \times S^{1}$ with degree $k$. Then there exists a lifting $\hat{K}$ to $S_{g} \times \mathbb{R}$. Since $K$ has the degree $k$, $\hat{K}$ is not a knot in $S_{g} \times \mathbb{R}$, but it is a long knot of infinite copy of $\hat{K} \cap [0,k]$. But, if we consider a covering $p_{k}$ from $S_{g} \times S^{1}$ to $S_{g} \times S^{1}$ defined by $p_{k}(x,z) = (x,z^{k})$, the lifting $\hat{K}$ to $S_{g} \times S^{1}$ becomes a knot in $S_{g} \times S^{1}$. The algorithm to obtain is similar to the algorithm in the previous section:\\
\textbf{Step 1.} Let $D$ be an oriented diagram of $K$ in $S_{g} \times S^{1}$. Let fix a point point on a diagram and give a label for each arcs according to double lines. Let us say that the minimal label is $m$ and the maximal label is $M$, where $M-m = k$.\\

\textbf{Step 2.} Let us make $M-m+1$ parallel planes placed vertically. Give numberings from bottom to top by integers from $m$ to $M$. Draw $M-m$ copies of $D$ for each plane. For a copy of a diagram on the plane with $k$ erase arcs which are not labeled by $k$. \\

\textbf{Step 3.} Let us start walking from the point on the diagram on the plane with number $0$, which corresponds to the fixed point. When we meet the double line, if it is longer line, then we connect the arc to the arc on the plane with number $1$, but if it is shorter line, then we connect the arc to the arc on the plane with number $-1$. Let us denote the obtained diagram by $\hat{D}$. Note that, since the degree is $k$, we need to connect some arcs on the highest ($M$-th) layer with arcs on the lowest ($m$-th) layer, although the arcs go up according to double lines.
When we connect them, we add a double line on the connecting arc. Let us denote the obtained diagram by $\hat{D}^{k}$.

\begin{cor}
Let $D$ and $D'$ be two oriented diagrams. If they are equivalent, then $\hat{D}$ and $\hat{D'}$ are equivalent as knots in $S_{g} \times S^{1}$ with degree $1$ or $-1$.
\end{cor}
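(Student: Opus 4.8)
The statement is the analogue, for knots of degree $k\ne 0$, of the theorem on liftings of degree-$0$ knots (to which it reduces when $k=0$, with target $S_{g}\times\mathbb{R}$ rather than $S_{g}\times S^{1}$), and I would prove it along the same lines; throughout, $\hat{D}$ denotes the diagram $\hat{D}^{k}$ produced by Steps 1--3, with $k=\deg D=\deg D'\ne 0$. The first, purely formal, point is that $\hat{D}^{k}$ really is a diagram of a knot in $S_{g}\times S^{1}$ of degree $\pm1$. Indeed $K_{*}\pi_{1}(S^{1})$ is the cyclic subgroup of $\pi_{1}(S_{g}\times S^{1})=\pi_{1}(S_{g})\times\mathbb{Z}$ generated by $([K]_{S_{g}},k)$, where $[K]_{S_{g}}\in\pi_{1}(S_{g})$ is the class of $K$ projected to $S_{g}$; this lies in $(p_{k})_{*}\pi_{1}(S_{g}\times S^{1})=\pi_{1}(S_{g})\times k\mathbb{Z}$, so the lift $\hat{K}$ along $p_{k}$ exists, it is an embedded circle because it is a lift of the embedding $K$ along a covering, and $\deg K=(\deg p_{k})\cdot\deg\hat{K}$ with $|\deg p_{k}|=|k|$ forces $\deg\hat{K}=\pm1$. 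Steps 1--3 are just a combinatorial model of this lift, the extra ``wrap--around'' double line of Step 3 recording the single passage of $\hat{K}$ through the slice $S_{g}\times\{x_{0}\}$ used to draw $\hat{D}^{k}$.

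By Theorem~\ref{thm:diag_on_plane} it is enough to show that if $D'$ arises from $D$ by a single move of Fig.~\ref{moves2}, then $\hat{D}^{k}$ and $\widehat{D'}^{k}$ differ by finitely many such moves. The underlying reason is clean: the covering $Id_{S_{g}}\times\Pi\colon S_{g}\times\mathbb{R}\to S_{g}\times S^{1}$ factors through $p_{k}$ via $q(x,r)=(x,e^{2\pi i r/k})$, whose deck group is $k\mathbb{Z}$; a move is supported in a small ball, its $q$-preimage is a $k\mathbb{Z}$-invariant family of balls, hence the move lifts to a $k\mathbb{Z}$-periodic family of moves in $S_{g}\times\mathbb{R}$, and this descends along $q$ to a sequence of moves between $\hat{D}^{k}$ and $\widehat{D'}^{k}$. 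So the only thing to determine is what the lift of each individual move looks like, and this is precisely the case analysis carried out in the proof of the theorem on degree-$0$ liftings.

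For the moves $(1),(2),(3),(1'),(2'),(3'),(3'')$ the lift is a disjoint union of copies of the same move together with virtual Reidemeister moves $(1')$--$(3'')$ coming from the relabelling of sheets, so $\hat{D}^{k}\cong\widehat{D'}^{k}$ exactly as in the degree $0$ case. For $(4)$ and $(4')$ the three segments involved carry labels $a$, $a+1$, $b$ — the value $a+1$ being forced by Remark~\ref{remark} — and, as in Figures~\ref{lifting4_type1} and \ref{lifting4_type2}, lifting turns the move into the relocation of one classical crossing between two consecutive sheets, which is realized by moves of Fig.~\ref{moves2}. For $(5)$ and $(5')$ the arc lying between two double lines lifts to a sheet lying below everything in its neighbourhood, so it can be pushed across as in Figure~\ref{lifting5}. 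Concatenating, any chain of moves from $D$ to $D'$ lifts to a chain of moves from $\hat{D}^{k}$ to $\widehat{D'}^{k}$.

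The point that needs the most care is the interference of a move with a wrap--around double line — equivalently, that the periodic lift of a move descends without introducing a new relation. There are two ways to deal with it: isotope the portion of $D$ touched by a given move off the slice $S_{g}\times\{x_{0}\}$ before performing it, which changes $\hat{D}^{k}$ only by an isotopy of $S_{g}\times S^{1}$ and by sliding double lines along arcs, hence by moves of Fig.~\ref{moves2}; or argue directly in the periodic picture, where a move straddling the levels $\dots,-k,0,k,\dots$ is still supported in a $k\mathbb{Z}$-invariant union of balls and so descends automatically. Once this bookkeeping is in place the corollary follows for every $k\ne 0$, while $k=0$ recovers the theorem on degree-$0$ liftings with $S_{g}\times\mathbb{R}$ in place of $S_{g}\times S^{1}$.
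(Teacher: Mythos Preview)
The paper gives no explicit proof of this corollary; by calling it a corollary it is signalling that the move-by-move case analysis of the preceding theorem on degree-$0$ liftings carries over directly to the $k$-fold cover $p_{k}\colon S_{g}\times S^{1}\to S_{g}\times S^{1}$, and that is precisely what you do. Your argument is correct and matches the paper's intended route, with the added virtue of making explicit the covering-space reason the lift has degree $\pm1$ and of handling the wrap-around double line of Step~3, points the paper passes over in silence.
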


If we consider liftings $\hat{K}_{s}$ such that $\hat{K}_{s}(0) = \hat{K}_{s}(0) = (x, e^\frac{2\pi si}{k})$, then we obtain a link in $S_{g} \times S^{1}$ of $k$ components. Note that the classical crossing with label $b-a$, then it corresponds to the crossing of infinite link $\sqcup_{s = 0, \dots, k-1} K_{s}$ between two components $K_{a}$ and $K_{b}$.
 
\section{Links in $S^{3}$ and links in $S_{g} \times S^{1}$}

Let us remind that one can obtain a link $K$ in $\Sigma \times S^{1}$ by using 2-component links $K \sqcup J$ in $S^{3}$, where $J$ is a fibered knot (or link) and $\Sigma$ is a Seifert surface of $J$. In this section, we will discuss how to obtain a link $K \sqcup J$ in $S^{3}$ from a link $K$ in $S_{g} \times S^{1}$. \\[2mm]

\textbf{Step 1.} Let $K$ be a link $S_{g} \times S^{1}$ and $D$ a diagram of $D$.\\

 \begin{figure}[h!]
\begin{center}
 \includegraphics[width = 5cm]{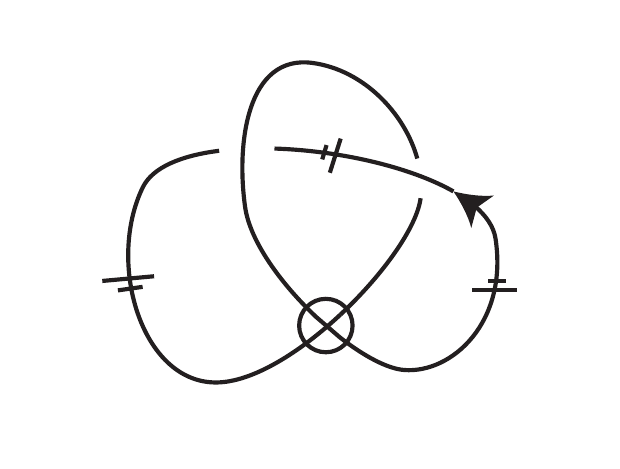}
\end{center}
\caption{A diagram of a knot in $S_{g} \times S^{1}$ of degree $3$}
\end{figure}

\textbf{Step 2.} Let us construct band presentation of the diagram.\\

 \begin{figure}[h!]
\begin{center}
 \includegraphics[width = 5cm]{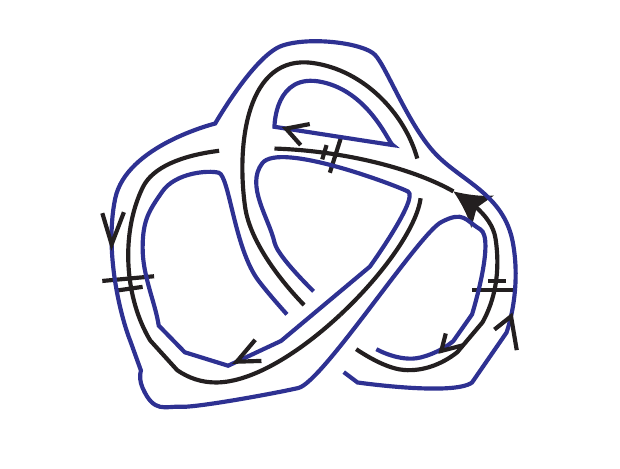}
\end{center}
\caption{A band presentation of $D$}\label{exa_linkwfiber-2}
\end{figure}

\textbf{Step 3.} To obtain an orientable surface, we twist the band where source-sink structure is broken, see Fig.\ref{exa_linkwfiber-3}. This is always possible and it is proved in \cite{NaokoKamada}.

 \begin{figure}[h!]
\begin{center}
 \includegraphics[width = 5cm]{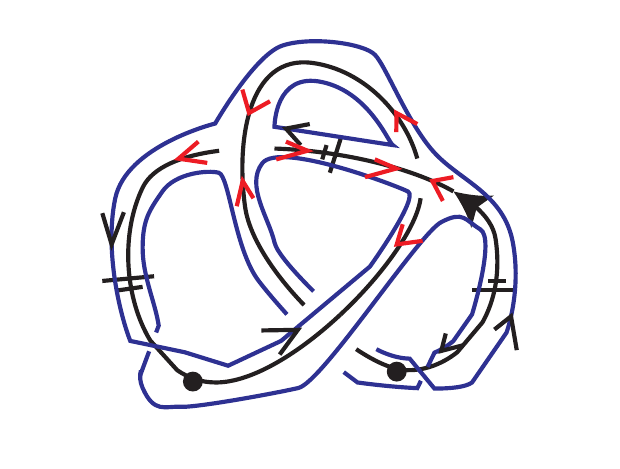}
\end{center}
\caption{A band presentation of $D$}\label{exa_linkwfiber-3}
\end{figure}

\textbf{Step 4.} If there is a circle, which can be contractable, then we connect it with one of boundary of bands as described in Fig.~\ref{exa_linkwfiber-rule}. For each double line we make a link as described in Fig.~\ref{exa_linkwfiber-rule}. Then we obtain a link $K \sqcup J$, where $J$ is a fibered link, see Fig. \ref{exa_linkwfiber-4}.

 \begin{figure}[h!]
\begin{center}
 \includegraphics[width = 8cm]{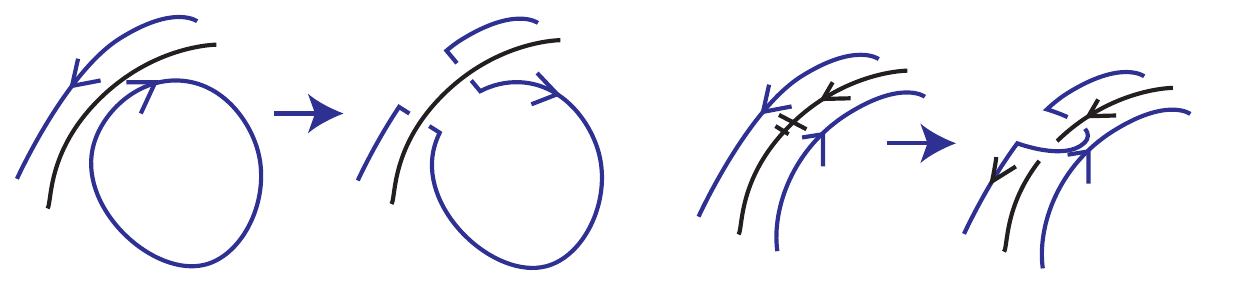}
\end{center}
\caption{}\label{exa_linkwfiber-rule}
\end{figure}

 \begin{figure}[h!]
\begin{center}
 \includegraphics[width = 5cm]{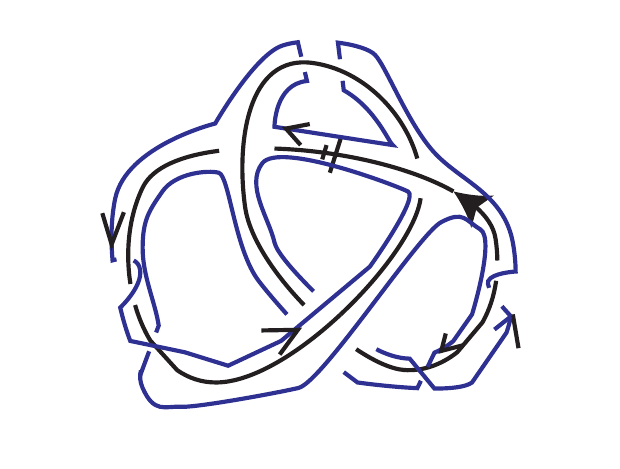}
\end{center}
\caption{A link $K \sqcup J$, where $J$ is a fibered link}\label{exa_linkwfiber-4}
\end{figure}

If $K$ is a knot, then it is easy to see that $lk(K, J) = deg(K)$. Moreover, the knot $K$ is placed on a Seifert surface $\Sigma$ with $\partial \Sigma = J$.

\textbf{Question} The component $J$ of link $K \sqcup J$ is a fibered link?\\[2mm]

\textbf{Answer} Yes. \\[2mm]

\textbf{Question} Is this mapping well-defined? \\[2mm]

\textbf{Answer} No.

\begin{exa}
In Fig.~\ref{exa_notgood} we have two diagrams of links in $S_{g} \times S^{1}$. But, the links in $S^{3}$ obtained from them are not equivalent, more precisely, the left link in Fig.~\ref{exa_notgood} is a Hopf link with separated trivial component, but  the left link in Fig.~\ref{exa_notgood} is a Borromean link.
 \begin{figure}[h!]
\begin{center}
 \includegraphics[width = 4cm]{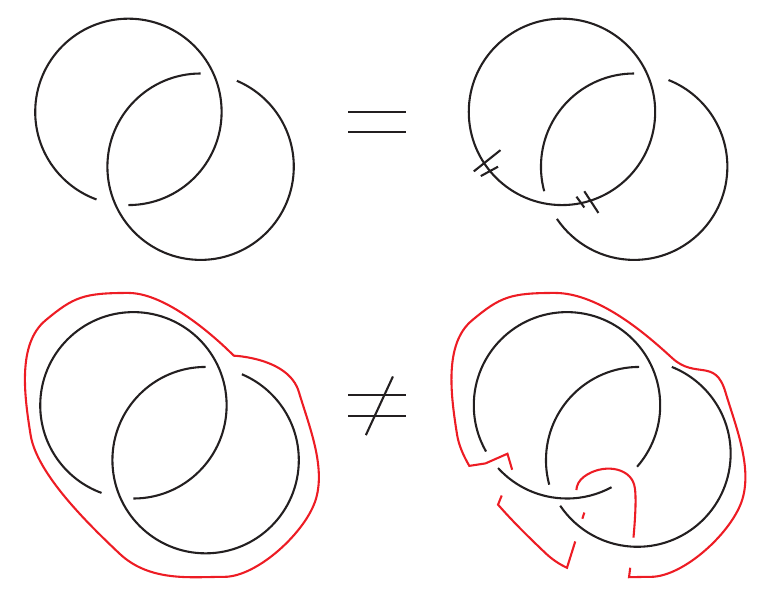}
\end{center}
\caption{A link in $S_{g} \times S^{1}$ corresponding to two different links}\label{exa_notgood}
\end{figure}
\end{exa}

\section{Simple invariants for knots in $S^{2} \times S^{1}$}

\subsection{Unknotting number of knots in $S^{2} \times S^{1}$}

Let us consider a knot in $S^{2} \times S^{1}$. Let $D$ be a diagram of $K$. Notice that $D$ has no virtual crossings. Now let us consider ``crossing change'' of the diagram $D$ as Fig.~\ref{crossing_change}.

  \begin{figure}[h!]\label{crossing_change}
\begin{center}
 \includegraphics[width = 4cm]{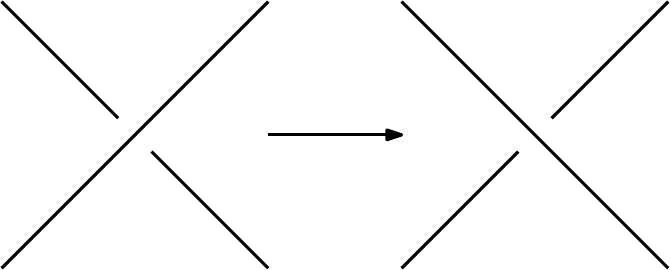}

\end{center}
 \caption{}
\end{figure}

For diagrams of knots (or links) in $S^{2} \times S^{1}$ the operation ``crossing change'' gives us an unknotting operation as follow:

Step 1) By the move (4) in Fig.~\ref{moves2} and crossing change, we can move every double lines to one semi-arc, see Fig.~\ref{cross_double_line}.

  \begin{figure}[h!]\label{cross_double_line}
\begin{center}
 \includegraphics[width = 7cm]{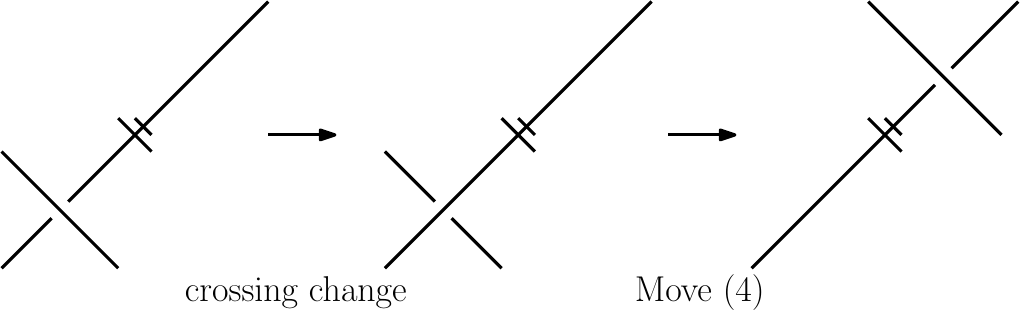}

\end{center}
 \caption{}
\end{figure}

Step 2) After that we obtain a diagram, in which every double line is on a semi-arc. Then it is easy to see that we can transform it to a descending diagram.

Now we call the minimum number of crossing change to obtain a trivial diagram {\it unknotting number}. It must be an invariant for knots in $S^{2} \times S^{1}$.

\subsection{Quandle like invariants for knots in $S_{g}\times S^{1}$}

\begin{dfn}
  Let $Q$ be a set with binary operations $(\{*^{\pm}_{i}\},\{\circ^{\pm}_{j}\})$ for some positive integer $n$ and a function $S$ from $Q$ to itself. If $(Q,\{*^{\pm}_{i}\},\{\circ^{\pm}_{j}\},S)$ satisfies the following properties, we call this a \textit{labeled quandle} of degree $n$.
  \begin{enumerate}
  \item $x*_{0} x = x \circ_{0} x$
  \item $x*_{j}y = x*_{j}(y*_{j}x)$, $(x \circ_{i} y)\circ_{i} y=x$
  \item $x*_{i}y=x*_{i}(y \circ_{i}^{-1} x)=x\circ_{j}(S(y))$,\\
  $S(y)*_{j} x=S(y\circ_{i}^{-1}x)$,\\
  $x\circ_{j}^{-1}S(y\circ_{i}^{-1}x)=x\circ_{i}^{-1}(y\circ_{i}^{-1}x)$,\\
  $S(y \circ_{i}^{-1}x)*_{j}(x \circ_{j}^{-1}S(y\circ_{i}^{-1}x))=S(y)$. $i+j =\pm 1$.
  \item $(z\circ_{i}y)\circ_{k}x=(z\circ_{k}(x\circ_{j}y)\circ_{i}(y\circ_{j}x)$, \\ $(y\circ_{j}x)*_{i}(z\circ_{k}(x*_{j}y))=(y*_{j}z)\circ_{j}(x*_{k}(z*_{i}y))$,\\
       $(x*_{j}y)*_{k}z = (x*_{k}(z\circ_{i}y))*_{j}(y*_{i}z)$. $i+j-k =0$.
\end{enumerate}
\end{dfn}

\begin{lem}
  Let $(Q,\{*^{\pm}_{i}\},\{\circ^{\pm}_{j}\},S)$ be a labeled quandle. For all $i$, $S(a*_{i}b)=S(a)*_{i}S(b)$ and $S(a\circ_{i}b)=S(a)\circ_{i}S(b)$.
\end{lem}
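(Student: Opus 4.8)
The only defining property of a labeled quandle that mentions $S$ is (3), so both asserted identities must follow from (3) together with the inverse-operation axioms in (2). The picture to keep in mind is that $S$ is the monodromy of an admissible colouring around the $S^{1}$-direction, and that ``$S$ is an endomorphism'' is exactly the colouring-compatibility condition for the move sliding a classical crossing past a double line — the move responsible for the shift $i+j=\pm1$ appearing throughout (3) and in Remark~\ref{remark}. So the statement ought to be forced, and the proof is simply a matter of unwinding (3).

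First I would record the twisted exchange relations contained in (3). For a label $i$ pick $j$ with $i+j=\pm1$, so the first line of (3) reads $x*_{i}y=x\circ_{j}S(y)$. Applying the inverse operations from (2) to both sides and comparing (every element of $Q$ has the form $x*_{i}y$ for suitable $x$) yields the companion relation $p*_{i}^{-1}y=p\circ_{j}^{-1}S(y)$ for all $p$; interchanging $i$ and $j$, which is legitimate since the constraint $i+j=\pm1$ is symmetric, gives the analogous pair $p*_{j}y=p\circ_{i}S(y)$ and $p*_{j}^{-1}y=p\circ_{i}^{-1}S(y)$. Thus one may trade any $*$ for a $\circ$, and conversely, at the price of one application of $S$ to the second argument and a $\pm1$ shift of the label.

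Next I would establish the $\circ$-identity. Beginning from the second line of (3), $S(y)*_{j}x=S(y\circ_{i}^{-1}x)$, substitute $w=y\circ_{i}^{-1}x$, so $y=w\circ_{i}x$ by (2); the relation becomes $S(w\circ_{i}x)*_{j}x=S(w)$, and undoing the outer $*_{j}x$ via (2) gives $S(w\circ_{i}x)=S(w)*_{j}^{-1}x$. Feeding the right-hand side through the twisted exchange relation of the previous step, and then using once more the clause of (2) that $\circ_{i}^{-}$ undoes $\circ_{i}^{+}$, one arrives at $S(w\circ_{i}x)=S(w)\circ_{i}S(x)$; since $i,w,x$ are arbitrary, $S(a\circ_{i}b)=S(a)\circ_{i}S(b)$ for all $i$. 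The $*$-identity then follows in one line from the first line of (3): $S(a*_{i}b)=S(a\circ_{j}S(b))=S(a)\circ_{j}S(S(b))=S(a)*_{i}S(b)$, the middle equality being what was just proved and the outer ones the first line of (3) applied to $(a,b)$ and to $(S(a),S(b))$. The case $i+j=-1$ is handled by the same chain of substitutions.

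I expect the delicate point to be purely one of bookkeeping in the middle step: one must track which partner $j$ (with $i+j=+1$ or $i+j=-1$) is used at each substitution and, more importantly, keep the ``$+$'' and ``$-$'' versions of the operations straight, so that the inverse operation introduced when undoing $*_{j}x$ is precisely the one cancelled when converting back to $\circ_{i}$; depending on the exact reading of the $\pm$-clauses in (2)--(3) this reconciliation is either immediate or costs one extra application of (2). By contrast the first and last steps are routine formal substitutions.
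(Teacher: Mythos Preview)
Your approach is correct and is essentially the same as the paper's: first establish $S(a\circ_{i}b)=S(a)\circ_{i}S(b)$ by substituting into one of the $S$-relations in (3) and simplifying via the exchange identity $x*_{i}y=x\circ_{j}S(y)$, then deduce the $*$-identity from the $\circ$-identity using that same exchange. The only cosmetic difference is that you start from clause (3)-2 while the paper starts from clause (3)-4; since (3)-4 collapses to (3)-2 once you apply the middle equality of (3)-1 (namely $p*_{j}q=p*_{j}(q\circ_{j}^{-1}p)$), the two routes are the same computation up to that one preliminary reduction. Your final paragraph correctly identifies the only place where care is needed, and your derivation of the companion relation $p*_{j}^{-1}y=p\circ_{i}^{-1}S(y)$ is justified because $\circ_{i}$ is involutive by (2), hence $*_{j}y=\circ_{i}S(y)$ is invertible with inverse $\circ_{i}^{-1}S(y)=\circ_{i}S(y)$.
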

\begin{proof}
For some $i,j$ with $i+j =0$, from the equation (3)-4, we can get $S(t)*_{j}(x \circ_{j}^{-1}(S(t)))=S(t \circ_{i} x)$ by putting $t= y \circ_{j}^{-1} x$. From the equation (3)-1, we can get $S(t)*_{j}(x \circ_{j}^{-1}(S(t))) = S(t) \circ_{i} S(y)$. Therefore, $S(t \circ_{i} x)=S(t) \circ_{i} S(y)$. Note that from the equation (2), $\circ = \circ^{-1}$. From the equation (3)-1 and (3)-2, $S(x) *_{i}S(y) = S(x) \circ_{j}S(S(y))$ $= S(x) \circ_{j}^{-1} S(S(y)) = S(x*_{i}y)$.
\end{proof}

\begin{lem}
Let $D$ be a labeled diagram with degree $n$. Let $(Q,\{*^{\pm}_{i}\},\{\circ^{\pm}_{j}\},S)$ be a labeled quandle of degree $n$. A coloring for $D$ by $(Q,\{*^{\pm}_{i}\},\{\circ^{\pm}_{j}\},S)$ is invariant under the moves for knots in $S_{g} \times S^{1}$.
\end{lem}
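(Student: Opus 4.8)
The plan is to verify, move by move, that the colouring equations built into the definition of a labeled quandle are exactly the compatibility conditions forced by each generating move in Fig.~\ref{labelmoves2}. First I would fix notation: at each classical crossing, if the under-arc carries a colour $x\in Q$ and label data $i$ on the side we pass under, and the over-arc carries colour $y$, then the outgoing under-arc is coloured $x*_{i}y$ or $x\circ_{i}y$ according to the sign of the crossing; the map $S$ records the effect of crossing a double line, i.e.\ passing once around the $S^{1}$ factor, so an arc entering a double line with colour $y$ exits with colour $S(y)$ (and $S^{-1}$ for the oppositely-oriented double line). With these conventions, a colouring is by definition an assignment of elements of $Q$ to the arcs of $D$ that is consistent at every crossing and across every double line.

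Next I would go through the moves one at a time. For the first Reidemeister-type move (a kink, with crossing label $0$), consistency of the colouring forces $x*_{0}x=x$ and $x\circ_{0}x=x$, which is axiom~(1) together with the $\circ_{0}$ half read off from $x\circ_{0}x=x*_{0}x$. For the second Reidemeister move, with arcs labeled $a$ and $b$ and the two crossings carrying labels $i=b-a$ and $j=a-b$ (so $i+j=0$), invariance of the colouring gives $x\circ_{i}(y*_{i}x)=x$ together with $x*_{i}y=x*_{i}(y\circ_{i}^{-1}x)$ — precisely axiom~(2) and the first line of axiom~(3). The mixed versions of the second move, where one strand runs under and then over, produce the remaining identities of axiom~(3), and here the substitution $i+j=\pm1$ matters because the two strands of the bigon sit on adjacent $S^{1}$-levels. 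For the third Reidemeister move, with top/middle/bottom arcs coloured and crossing labels $i,j,k$ satisfying $i+j-k=0$, writing the colour of the bottom-most outgoing arc in the two ways the move allows yields exactly the three equalities listed in axiom~(4); the previous lemma, $S(a*_{i}b)=S(a)*_{i}S(b)$ and likewise for $\circ$, is what lets one push $S$ through when the two sides of move~3 distribute the double lines differently. Finally, for the move that drags an arc past a vertex (move (4)/(4') in the earlier theorem), the key input is Remark~\ref{remark}: the two arcs meeting the double line differ in label by $1$, and the computation in the proof of the labeling theorem shows the two new crossings get labels $i$ and $j$ with $i+j=-1$; invariance then reads as an identity relating $*_{i}$, $\circ_{j}$ and $S$, which is again packaged into line~3 of the axioms (the entries with $i+j=\pm1$) once one uses that $S$ is the transition map across the double line.

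Concretely, the proof is a finite checklist: for each of the moves (1)–(5) and their primed variants in Fig.~\ref{moves2}, upgraded to the labeled moves in Fig.~\ref{labelmoves2}, I would (a) draw the two local pictures, (b) assign free colours to the incoming arcs, (c) compute the colours of all internal and outgoing arcs on each side using the crossing rules and the rule $y\mapsto S(y)$ at double lines, and (d) observe that the resulting outgoing colours agree if and only if one of the labeled-quandle axioms holds, using the label relations ($0$; $i+j=0$; $i+j-k=0$; $i+j=\pm1$) established in the proof of the labeling theorem to match indices. The earlier lemma on $S$ being an automorphism for every $*_{i}$ and $\circ_{i}$ is invoked freely in the cases where a double line must be slid across a crossing.

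The main obstacle I expect is bookkeeping rather than conceptual: there are many move variants (signs of crossings, orientations of strands, orientations and positions of the double lines), and in the mixed second-move and the vertex-slide cases one must be careful that the index shift by $\pm1$ is attached to the right operation and that $S$ or $S^{-1}$ appears on the correct arc — getting any of these wrong produces an identity that is not among the axioms. A secondary subtlety is that axiom~(3) is stated as a chain of equalities rather than a single relation, so for each relevant move I must check that the colouring consistency reproduces the \emph{entire} chain (equivalently, that the chain is exactly the closure of the one relation the move imposes under the already-verified axioms (1),(2)); I would handle this by deriving the chain from a single relation using (2) (which gives $\circ=\circ^{-1}$) and Lemma on $S$, exactly as in the proof of the preceding lemma.
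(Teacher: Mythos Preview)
Your approach is exactly the paper's: the labeled-quandle axioms are \emph{defined} so that each one encodes invariance of a colouring under one of the labeled moves, and the paper's entire proof is the single sentence ``By definition, this is straight forward.'' Your proposal is simply a (considerably more detailed) unpacking of that sentence, so there is nothing further to compare.
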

\begin{proof}
  By definition, this is straight forward.
\end{proof}

\begin{rem}
   Let $(Q,\{*^{\pm}_{i}\},\{\circ^{\pm}_{j}\},S)$ be a labeled quandle. Then $(Q, *_{0}, \circ_{0})$ is a biquandle where $B : Q \times Q \rightarrow Q \times Q$ is defined by $B(x,y) = (y\circ_{0}x, x*_{0}y)$ and $B^{-1}(x,y) = (y\circ_{0}^{-1}x, x*_{0}(y\circ_{0}^{-1}x))$.
   \begin{figure}[h!]
\begin{center}
 \includegraphics[width = 8cm]{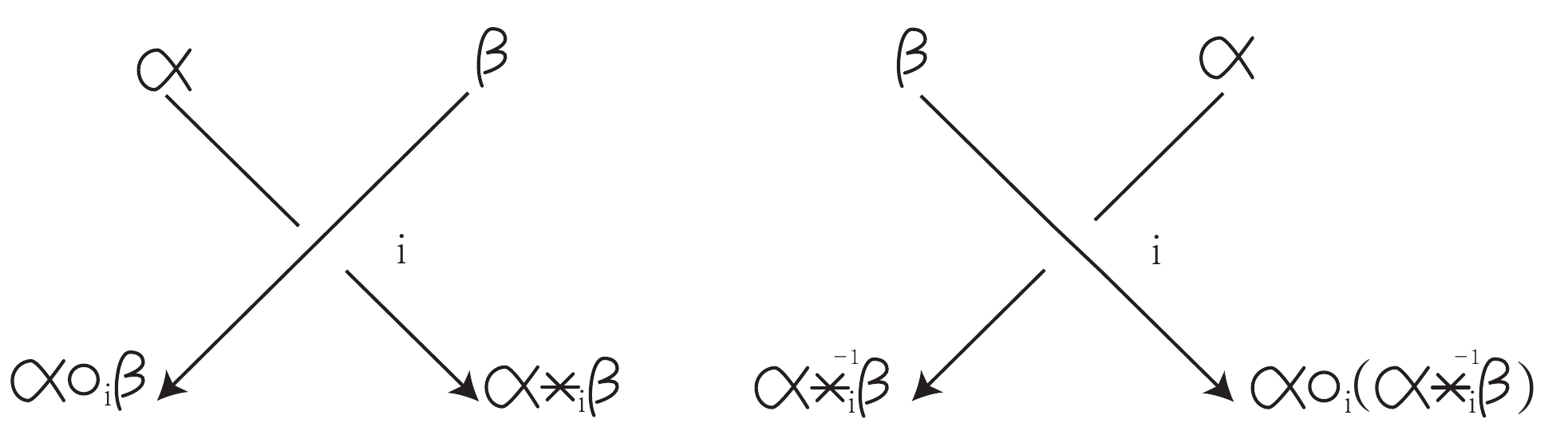}

\end{center}
 \caption{}
\end{figure}
\end{rem}

We may define free labeled quandles and we can define labeled quandles from knots in $S_{g} \times S^{1}$. But we have NO REAL EXAMPLES for labeled quandles. Moreover, we do not know what we can know from the quandle. We need to find real examples from already know algebraic structures.

\subsection{Bracket polynomial for labeled diagrams}

Let us define a polynomial invariant for links in $S_{g} \times S^{1}$. For a diagram $D_{L}$ of a link $L$ in $S_{g} \times S^{1}$ we define $[D_{L}]$ valued in $\mathbb{Z}[A_{a,b}^{\pm 1}, B_{a,b}^{\pm 1}, \delta_{m}]$ for $a,b \in \mathbb{Z}$ and $m \in \mathbb{Z}_{2}$ by skein relations as in Fig.~\ref{def_bracket}.

   \begin{figure}[h!]\label{def_bracket}
\begin{center}
 \includegraphics[width = 6cm]{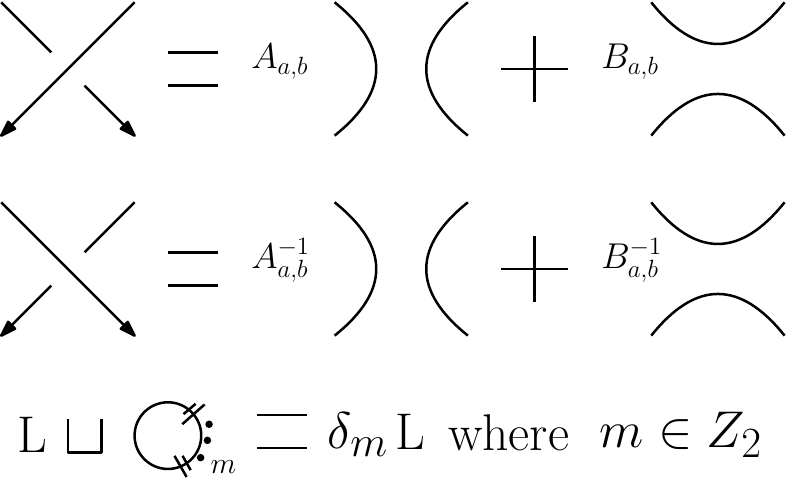}

\end{center}
 \caption{}
\end{figure}
To obtain an invariant, the following relations are needed.
$$\delta_{0}+B_{a,b}A_{a,b}^{-1}+A_{a,b}B_{a,b}^{-1} =0$$
$$A_{a,b}A_{a,c}B_{b,c}+B_{a,b}A_{a,c}A_{b,c} + B_{a,b}A_{a,c}B_{b,c} = A_{a,b}B_{a,c}A_{b,c}$$
$$A_{a,b}\delta_{i}\delta_{j} + B_{a,b}\delta_{k}\delta_{l}= C_{a,b}\delta_{i+1}\delta_{j-1} + D_{a,b}\delta_{k-1}\delta_{l+1},$$
for any $a,b,c \in \mathbb{Z}$ and $i,j,k,l \in \mathbb{Z}_{2}$.


\begin{thebibliography} {100}

\bibitem{Manturovbook}
V. O. Manturov Knot theory: Second edition CRC press 560 April 4, 2018.

\bibitem{Manturov_vir_book}
V.O. Manturov, D.P. Ilyutko {\em Virtual knots : the state of the art,} World Scientific Publishing.

\bibitem{Book}
V.O. Manturov, D.A. Fedoseev, S. Kim, I.M. Nikonov, {\it Invariants and Pictures : Low-dimensional Topology and Combinatorial Group Theory,} Series on Knots and Everything: Volume 66, World Scientific. 


\bibitem{CM}
M. Chrisman, V.O. Manturov, {\it Fibered Knots and Virtual Knots,} Journal of Knot Theory and Its Ramifications, Volume 22, Issue 12 (2013); World Scientific Publishing Co. 23 pp.

\bibitem{Harer}
J. Harer, {\it How to construct all fibered knots and links,} Topology, Vol. 21, No. 3, pp. 263--280 (1982)

\bibitem{NaokoKamada}
N. Kamada, {\it Converting virtual link diagrams to normal ones,} arXiv:1606.00667v1 [math.GT] 2 Jun 2016

\bibitem{free-Man}
V.O. Manturov, {\it Free Knots and Parity,} ``INTRODUCTORY LECTURES ON KNOT THEORY'' Selected Lectures Presented at the Advanced School and Conference on Knot Theory and Its Applications to Physics and Biology ICTP, Trieste, Italy, 11 29 May 2009, World Scientific, pp. 321-345.



\end{thebibliography}
\end{document}